\newtheorem{theorem}{Theorem}
\newtheorem*{Theorem}{Theorem}
\newtheorem{corollary}{Corollary}
\theoremstyle{definition}
\newtheorem{definition}{Definition}
\newtheorem*{Definition}{Definition}
\newtheorem{proposition}{Proposition}
\title{$k$-type chaos of $\mathbb{Z}^d$ actions}
\author{Anshid Aboobacker, Sharan Gopal}
\begin{document}

\begin{center}

\large{\textbf{$k$-type chaos of $\mathbb{Z}^d$ actions}}

\small{Anshid Aboobacker, Sharan Gopal}

\textit{Department of Mathematics, BITS Pilani Hyderabad Campus}

anshidaboobackerk@gmail.com, sharan@hyderabad.bits-pilani.ac.in
\end{center}

\section*{Abstract}
In this paper, we define and study the notions of $k-$type proximal pairs, $k-$type asymptotic pairs and  $k-$type Li Yorke sensitivity for dynamical systems given by $\mathbb{Z}^d$ actions on compact metric spaces. We prove the Auslander-Yorke dichotomy theorem for $k-$type notions. The preservation of some of these notions under uniform conjugacy is also studied. We also study relations between these notions and their analogous notions in the usual dynamical systems.

\noindent\textbf{Keywords:} chaos, $k-$type chaos, Li-Yorke Sensitivity, induced actions, Auslander-Yorke dichotomy theorem

\noindent\textbf{2020 MSC:} 37B05, 37C85

\section{Introduction}

A pair $(X,f)$ where $X$ is a compact metric space and $f$ is a homeomorphism on $X$ is called a \emph{dynamical system}.
The dynamics on $X$ is given by the group $\left\{ f^n : n\in \mathbb{Z}\right\}$ where $f^n = f\circ f \circ \cdots \circ f$ ($n$ times), $f^{-n} = f^{-1}\circ f^{-1} \circ \cdots \circ f^{-1}$ ($n$ times) and $f^0$ is the identity function. This dynamical system can also be seen as a $\mathbb{Z}$ action on $X$, where $(n,x)\mapsto f^n(x) $ for every $ x\in X $ and $ n\in \mathbb{Z}$. We can generalize this concept to study any group action on $X$ and in particular, the actions of $\mathbb{Z}^d$ on $X$.
Hereafter, we write a dynamical system as $(X,T)$ where $X$ is a compact metric space and $T$ is a $\mathbb{Z}^d$ action on $X$; the image of $(n,x)$ under $T$ is denoted as $T^n(x) $ for every $x\in X $ and $ n\in \mathbb{Z}^d$.

In a 2008 paper \cite{oprocha2007chain}, to study about chain recurrences in multidimensional discrete time dynamical systems, P Oprocha introduced $k-$type limit sets, $k-$type limit prolongation sets and $k-$type transitivity for $\mathbb{Z}^d$ actions where $k$ is an integer between $1$ to $2^d$. Following this idea, Sejal Shah and Ruchi Das \cite{shah2015note,shah2015different} defined $k-$type sensitivity, $k-$type periodic points, $k-$type Devaney chaos, $k-$type Li Yorke pairs, etc. They studied about preservation of many of these notions under conjugacy and uniform conjugacy. Shah and Das \cite{shah2015note} also looked into relationship between these notions in $\mathbb{Z}^d$ actions and their induced actions on hyperspace $K(X)$, the space of all compact subsets of $X$. 

Kamarudin and Dzul-Kifli \cite{kamarudin2021sufficient} considered a dynamical system $(X,f)$ and studied the induced $\mathbb{Z}^d$ actions on $X$. They showed that transitivity in the base system persists in the induced system under certain conditions.

Sensitive dependence on initial conditions (also called sensitivity) is a notion well-studied in the literature. It was introduced by Auslander and Yorke \cite{auslander1980interval} in 1980 and popularized by Devaney in 1989 with his definition of chaos. Li and Yorke \cite{li2004period} in 1975 introduced the concept of Li-Yorke pairs while studying the interval maps. Akin and Kolyada \cite{akin2003li} introduced Li-Yorke sensitivity combining both these notions.

In this paper, we define and study $k-$type proximal pairs, $k-$type asymptotic pairs, $k-$type Li Yorke Sensitivity, $k-$type Li Yorke pairs and $k-$type Li-Yorke chaos and various relations between them. We also study that all these notions are preserved under conjugacies. Finally, we look into how these notions work in the induced $\mathbb{Z}^d$ actions.

\section{Preliminaries}

We will list some definitions and results for $\mathbb{Z}$ actions first and then discuss the corresponding definitions and results for $k-$type $\mathbb{Z}^d$ actions. Throughout the paper, $X$ denotes a compact metric space with metric $d$, $f:X\longrightarrow X $ is a homeomorphism and $T$ is a $\mathbb{Z}^d$ action on $X$.

A point $x\in X$ is called a \emph{periodic point} in the system $(X,f)$, if $f^n(x)=x$ for some $n \in \mathbb{N}$. For $x\in X$, we define the \emph{forward orbit} of $x$ as $\{ f^n(x) | n \geq 0 \}$.
We say that $f$ is \emph{topologically transitive} if there is a point $x\in X$ whose forward orbit is dense in $X$ and such an $x$ is called a transitive point.

The map $f$ is said to have \emph{sensitive dependence on initial conditions} or \emph{sensitivity} if there is an $r > 0$ (independent of the point) such that for each point $x\in X$ and for each $\epsilon> 0$ there is a point  $y\in X$ with $d(x, y) < \epsilon$ and a $k > 0$ such that $d(f^k(x), f^k(y)) > r$. 

A dynamical system $(X, f)$ is said to be \emph{ Auslander-Yorke chaotic} if $f$ has sensitive dependence on initial conditions and is topologically transitive. A dynamical system $(X, f)$ is said to be \emph{Devaney chaotic} if $f$ has sensitive dependence on initial conditions, is topologically transitive and the set of periodic points is dense in $X$. 

Two points $x,y\in X$ are said to form a \emph{proximal pair} if $\liminf\limits_{n\rightarrow \infty} d(f^n(x),f^n(y)) = 0$, i.e., if there exists an increasing sequence $n_k\in \mathbb{N}$, such that $\lim\limits_{n_k\rightarrow \infty} d(f^{n_k}(x),f^{n_k}(y)) = 0$. We also say that \emph{$y$ is proximal to $x$} and vice versa. 
If for every increasing sequence $n_k\in \mathbb{N}$,  $\lim\limits_{n_k\rightarrow \infty} d(f^{n_k}(x),f^{n_k}(y)) = 0$ then $x,y$ are said to form an \emph{asymptotic pair}; here again, $y$ is said to be \emph{asymptotic to} $x$ and vice versa. 

The set of all proximal pairs of $f$ is denoted by $Prox(f)$ and the set of all elements in $X$ proximal to a point $x\in X$ is called the \emph{proximal cell of $x$}, denoted by $Prox(f)(x)$. The set of all asymptotic pairs of $f$ is denoted by $Asym(f)$ and the set of all elements asymptotic to a point $x\in X$ is called as \emph{asymptotic cell of x} denoted by $Asym(f)(x)$. We have an equivalent definition for the set of all proximal pairs and the set of all asymptotic pairs, as given below.

For an $\epsilon>0$, define %\[\Delta = \{(x,x): x\in X\}, \]
\[ 
V_{\epsilon} = \{ (x,y)\in X\times X : d(x,y) < \epsilon \}, \]
\[
\overline{V_{\epsilon}} = \{ (x,y)\in X\times X : d(x,y) \leq \epsilon \}. \]

Then, 
\[Prox(f) = \bigcap_{\epsilon>0}\bigcup_{\substack{n> 0 \\ n\in \mathbb{N}}} \left[ f^{-n}\times f^{-n} (V_{\epsilon}) \right] \]

\[ Asym_{\epsilon}(f)= \bigcup_{\substack{n>0 \\ n\in \mathbb{N}}} f^{-n}\times f^{-n} \left( \bigcap_{\substack{m>0 \\ m\in \mathbb{N}}} [ f^{-m}\times f^{-m} (\overline{V_{\epsilon}})] \right) \]

and \[ Asym(f) = \bigcap_{\epsilon>0} Asym_{\epsilon}(f).   \]
 
Two points $x,y \in X$ which form a proximal pair, but not an asymptotic pair are said to form a \emph{Li-Yorke pair}, i.e., if $\liminf\limits_{n\rightarrow \infty} d(f^n(x),f^n(y)) = 0$ and  $\limsup\limits_{n\rightarrow \infty} d(f^n(x),f^n(y)) > 0$. The set of all Li-Yorke pairs of $f$ is denoted by $LY(f)$.

 A subset $S\subset X$ is called a \emph{scrambled} set if any two distinct points in $S$ form a Li-Yorke pair. $(X, f)$ is said to be \emph{Li-Yorke chaotic} if there exists an uncountable scrambled set in $X$.

 Combining the concepts of Li Yorke pair and sensitivity, Akin and Kolyada in 2003 \cite{akin2003li} defined the concept of \emph{Li Yorke sensitivity}. A dynamical system $(X,f)$ is said to be Li Yorke sensitive if there is an $\epsilon>0$ such that every $x\in X$ is a limit point of $Prox(f)(x) \setminus Asym_{\epsilon}(f)(x)$; in other words, $x\in \overline{Prox(f)(x) \setminus Asym_{\epsilon}(f)(x)}$.

A dynamical system $(X,f)$ is called \emph{equicontinuous} if for every $\epsilon>0$ there exists some $\delta>0$ such that, for every $x,y\in X$ with $d(x,y)<\delta$, we have  $d(f^n(x),f^n(y))<\epsilon $ for every $ n  \in \mathbb{Z}$; i.e., the family of maps $\{f^n: n\in \mathbb{Z}\}$ is uniformly equicontinuous. A point $x\in X$ is called an \emph{equicontinuous point} if for any $\epsilon>0$ there exists some $\delta>0$ such that for every $y\in X$ with $d(x,y)<\delta$, we have  $d(f^n(x),f^n(y))<\epsilon$ for every $ n  \in \mathbb{Z}$. The set of all equicontinuous points in $(X,f)$ is denoted by $Eq(f)$. 

Finally, a transitive system is called \emph{almost equicontinuous} if there exists an equicontinuous point in it. A dynamical system is \emph{minimal} if it does not contain any proper subsystem. We have the following theorem (see \cite{akin1996transitive}), popularly known as the \textit{Auslander-Yorke Dichotomy theorem}, showing the dichotomy of equicontinuous systems and sensitive systems.

\begin{Theorem}

Let $(X, f)$ be topologically transitive. 

If $(X, f)$ is almost equicontinuous, then the set of equicontinuous points coincides with the set of transitive points (and so the set of equicontinuous points is a dense $G_{\delta}$).
In particular, a minimal almost equicontinuous dynamical system is equicontinuous. 

If $(X, f)$ has no equicontinuous points then it is sensitive. In particular a minimal system is either equicontinuous or sensitive.
\end{Theorem}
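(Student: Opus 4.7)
The plan is to analyze equicontinuity through the auxiliary stratification
\[ E_\epsilon = \{ x \in X : \exists\, \delta>0 \text{ with } d(x,y)<\delta \Rightarrow d(f^n(x),f^n(y))<\epsilon \text{ for all } n \in \mathbb{Z} \}. \]
Three preliminary observations, each essentially immediate, come first: (i) $E_\epsilon$ is open by its defining quantifier; (ii) $Eq(f) = \bigcap_{k\ge 1} E_{1/k}$, so $Eq(f)$ is automatically a $G_\delta$; and (iii) $E_\epsilon$ is $f$-invariant, obtained by transporting the witnessing $\delta$ through the uniform continuity of $f^{\pm 1}$ on the compact space $X$. I would then invoke the standard fact that in a transitive system any nonempty open $f$-invariant set is dense, and that transitive points (when they exist) form a dense $G_\delta$ that visits every nonempty open set at arbitrarily large forward times.

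For the first assertion, suppose the system is transitive with $Eq(f)\neq\varnothing$, so every $E_\epsilon\supseteq Eq(f)$ is nonempty and hence dense. I would prove that $Eq(f)$ coincides with the set of transitive points by two inclusions. Equicontinuous implies transitive: given $x_0\in Eq(f)$ and a ball $B(z,r)$, apply equicontinuity at $x_0$ with threshold $r/2$ to obtain $\delta$; pick a transitive $y$ with some $n\ge 0$ satisfying $f^n(y)\in B(x_0,\delta)$ and then $N\ge n$ satisfying $f^N(y)\in B(z,r/2)$, so equicontinuity gives $d(f^{N-n}(x_0),f^N(y))<r/2$ and a triangle inequality places $f^{N-n}(x_0)\in B(z,r)$. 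Transitive implies equicontinuous: given a transitive $z$ and $\epsilon>0$, choose $k\ge 0$ with $f^k(z)\in B(x_0,\delta/2)$, where $\delta$ witnesses equicontinuity at $x_0$ for $\epsilon/3$; by continuity of $f^k$ find $\eta$ so that $d(z,w)<\eta \Rightarrow d(f^k(w),x_0)<\delta$; then for every $n\in\mathbb{Z}$ the triangle inequality through $f^{n-k}(x_0)$ yields $d(f^n(z),f^n(w))<2\epsilon/3$, placing $z\in Eq(f)$. In a minimal system every point is transitive and hence equicontinuous, and pointwise equicontinuity on compact $X$ upgrades to uniform equicontinuity by a finite-cover argument, yielding the ``in particular'' clause.

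For the second assertion, suppose the system is transitive with $Eq(f)=\varnothing$, so $\bigcap_k E_{1/k}=\varnothing$. Each $E_{1/k}$ is open and $f$-invariant, so by transitivity either empty or dense. If all were dense, Baire's theorem applied to the compact (hence complete) metric space $X$ would force their intersection to be dense, contradicting emptiness. Hence some $E_{\epsilon_0}$ is empty, yielding a uniform failure: for every $x\in X$ and every $\delta>0$ there exist $y$ with $d(x,y)<\delta$ and $n\in\mathbb{Z}$ with $d(f^n(x),f^n(y))\ge\epsilon_0$. Taking $r=\epsilon_0/3$ then produces sensitivity once a negative-time witness is converted to a positive-time one; this I would handle by replacing $(x,y)$ with its $f^n$-image and using uniform continuity of $f^{-n}$ on $X$ to keep the shifted base point inside the prescribed $\delta$-ball. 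The closing ``in particular'' follows: a minimal system is transitive, and the above dichotomy forces it to be either equicontinuous or sensitive.

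The main obstacle I foresee is the ``transitive implies equicontinuous'' inclusion in part one, where a single $\eta$ must control $d(f^n(z),f^n(w))$ uniformly over all $n\in\mathbb{Z}$; this genuinely requires both the two-sided equicontinuity at $x_0$ and careful use of continuity of a single finite iterate $f^k$. A related bookkeeping issue is reconciling the two-sided definition of equicontinuous point with the one-sided notions of transitivity and sensitivity from the excerpt; the $f$-invariance of $E_\epsilon$ under $f^{\pm 1}$ together with uniform continuity of iterates on compact $X$ is precisely what enables the needed back-and-forth, but the time-index arithmetic must be tracked carefully throughout.
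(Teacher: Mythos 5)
Your architecture is essentially the one the paper itself uses for the $k$-type analogue of this theorem in Section 3.1: your sets $E_\epsilon$ are (up to a factor of $2$ from the triangle inequality) the paper's $G_l$ with $\epsilon=1/l$; your openness and invariance observations are the paper's ``open and negatively invariant''; your ``transitive point is equicontinuous'' inclusion unpacks exactly the paper's step ``the forward orbit of a transitive point enters the open set $G_l$, which is negatively invariant, so the transitive point lies in $G_l$''; and your Baire argument producing an empty $E_{\epsilon_0}$ is actually more careful than the paper's bare assertion that some $G_l$ is empty. The only structural difference is the reverse inclusion (equicontinuity points are transitive points), which the paper derives from $L^k(x)=J^k(x)=X$ at an equicontinuity point, while you give a direct triangle-inequality argument through the orbit of a known transitive point; the two are equivalent in content.

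The genuine gap is in the sensitivity half, precisely at the step you flag. Emptiness of $E_{\epsilon_0}$ gives, for every $x$ and every $\delta>0$, a point $y\in B(x,\delta)$ and an $n\in\mathbb{Z}$ with $d(f^n(x),f^n(y))\geq\epsilon_0$; sensitivity demands a witness with $n>0$ and with $x$ itself as the base point. Your proposed conversion --- replacing $(x,y)$ by $(f^n(x),f^n(y))$ and using uniform continuity of $f^{-n}$ --- cannot work: the shifted pair is $\epsilon_0$-separated at time $0$ and is brought back within $\delta$ by the forward iterate $f^{-n}$, so it witnesses the opposite of sensitivity, and in any case it speaks about the base point $f^n(x)$ rather than $x$. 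A priori a point could be a forward equicontinuity point (so that no forward witness exists near it) while failing two-sided equicontinuity only through backward iterates, and then no local manipulation of a backward witness will produce a forward one. The honest repair uses transitivity essentially: run the whole dichotomy with forward-only equicontinuity points (there the empty-stratum argument does yield forward witnesses, and a triangle inequality converts two-point separation into separation from $x$ at the cost of a factor $2$), and then show that in a transitive almost equicontinuous invertible system forward equicontinuity points are automatically two-sided, via uniform rigidity as in Akin--Auslander--Berg. You should be aware that the paper's own proof of the $k$-type version makes the same silent leap from ``separated at some $n\in\mathbb{Z}^d$'' to ``separated at some $n>^k 0$'', so this is a defect worth recording rather than one you introduced.
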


\vspace{0.5cm}

Now, for a $\mathbb{Z}^d$ action, we have the following definitions given by P Oprocha\cite{oprocha2007chain}, S Shah and R Das \cite{shah2015note,shah2015different}. Let $T: \mathbb{Z}^d \times X \rightarrow X$ be a $\mathbb{Z}^d$ action on $X$ and let $k\in \{1,2,3,\dots,2^d\}$. Let $k^b$ represent $k-1$ in $d$-positional binary system, i.e., $k-1= \sum_{i=1}^d k_i^b 2^{i-1}$ with $k^b\in \{0,1\}^d$. For $x,y\in \mathbb{Z}^d $, we say $x>^ky$ if $(-1)^{k_i^b} x_i > (-1)^{k_i^b} y_i ~~\forall i$, where $x=(x_1,\dots,x_d)$ and $y=(y_1,\dots,y_d)$.

 A point $x\in X $ is a \emph{$k-$type periodic point} if there is an ${n}\in \mathbb{Z}^d$, ${n}>^k{0}$ satisfying $T^{n}(x)=x$. A subset $A$ of $X$ is said to be positively invariant if $T^{n}(A)\subset A$ for any ${n}>^k{0}$ and negatively invariant if $T^{n}(A)\subset A$ for any ${n}<^k{0}$.
 
 $T$ is \emph{$k-$type transitive} if there exists $ x\in X$ such that its forward orbit $Orb_T(x)=\{ T^{n}(x): {n}\in \mathbb{Z}^d, {n}>^k{0}\}$ is dense in X. If $X$ does not have any isolated points, it is equivalent to saying that for any two nonempty open sets $U, V$ there exists ${n}>^k{0}$ such that $T^{n}(U)\bigcap V \neq \emptyset$. A point $x\in X$ whose forward orbit is dense in $X$ is called a \emph{$k-$ type transitivity point}.

$T$ has \emph{$k-$type sensitivity} if there exists $\delta >0$ such that for any $x\in X$ and any $\epsilon > 0$ there exists $y\in B_d(x,\epsilon)$ and ${n}>^k{0}$ such that $d(T^{n}(x), T^{n}(y)) > \delta$, or equivalently, if there exists $\delta >0$ such that for any non empty open set $U\subset X$, there exists $x,y\in U$ and ${n}>^k{0}$ such that $d(T^{n}(x), T^{n}(y)) > \delta$.

We say that $T$ is \emph{$k-$ type Devaney chaotic} if $T$ is $k-$type transitive, has a dense set of $k-$type periodic points and has $k-$type sensitivity.
We say that $T$ is \emph{$k-$type Auslander-Yorke chaotic} if $T$ is $k-$type transitive and has $k-$type sensitivity.

The \emph{$k-$type limit set} of a point $x\in X$ is $L^k(x)=\{y\in X:$ there exists a sequence $({t_s})_{s\in \mathbb{N}} $ in $\mathbb{Z}^d$ with $ {t_{s+1}} >^k {t_{s}}  $ such that $\lim_{s\rightarrow\infty} T^{{t_s}}(x)=y\}$. 
The \emph{$k-$type limit prolongation set} of a point $x\in X$ is given by $J^k(x)=\{y\in X:$ there exists sequence $(x_s)_{s\in \mathbb{N}}$ in $X$ and $({t_s})_{s\in \mathbb{N}} $ in $\mathbb{Z}^d$ with $ {t_{s+1}} >^k {t_{s}}  $ such that $\lim_{s\rightarrow\infty} x_s=x$ and $\lim_{s\rightarrow\infty} T^{{t_s}}(x_s)=y\}$. Equivalently, we can say $y\in J^k(x)$ if for every neighbourhood $U$ of $y$, neighbourhood $V$ of $x$ and $ {N}>^k{0}$ there is an ${{n}}>^k{N}$ and a point $z\in V$ such that $T^{{n}}(z)\in U$.

Kamarudin and Dzul-Kifli in their paper \cite{kamarudin2021sufficient} defined and studied a special class of induced dynamical systems. When discussing induced systems, we use bold formatting for elements of $\mathbb{Z}^d$ to distinguish them from elements of $\mathbb{Z}$. Let $(X,f)$ be a dynamical system with $f:X\rightarrow X$ a homeomorphism on $X$. Let $r:\mathbb{Z}^d \rightarrow \mathbb{Z} $ be a homomorphism defined by $r(\textbf{n}) = h_1n_1 + h_2n_2 + \dots + h_dn_d$, where $h_i \in \mathbb{Z} $ for all $i$. Define $T_f: \mathbb{Z}^d \times X \rightarrow X$ by $T_f(\textbf{n},x)= f^{r(\textbf{n})}(x) $. Then $T_f$ is a $\mathbb{Z}^d$ action on $X$ induced by $f$. They proved the following theorem.

\begin{Theorem}
Let $f:X\rightarrow X$ be a transitive continuous map and $T_f: \mathbb{Z}^d \times X \rightarrow X$ be a $\mathbb{Z}^d$ action on $X$ induced by $f$. Suppose $k\in \{1,2,3,\dots,2^d\}$. If there exists $\textbf{m}>^k\textbf{0}$ such that $r(\textbf{m})=1$, then $T_f$ is $k-$type transitive. 
\end{Theorem}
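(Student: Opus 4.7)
The plan is to produce, directly from a transitive point of $f$, a $k$-type transitive point for $T_f$. Let $x_0 \in X$ be a point with $\{f^n(x_0) : n \geq 0\}$ dense in $X$, which exists by transitivity of $f$. The goal is to show that the $k$-type forward orbit $\{T_f^{\mathbf{n}}(x_0) : \mathbf{n} >^k \mathbf{0}\}$ is dense in $X$ as well.

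The main observation is that the hypothesis $\mathbf{m} >^k \mathbf{0}$ with $r(\mathbf{m})=1$ gives an arithmetic progression inside the $k$-positive cone whose images under $r$ exhaust $\mathbb{N}$. First I would verify that $j\mathbf{m} >^k \mathbf{0}$ for every integer $j \geq 1$: writing out the definition, the condition $\mathbf{m} >^k \mathbf{0}$ means $(-1)^{k_i^b} m_i > 0$ for each $i$, and multiplying by a positive scalar $j$ preserves this inequality coordinatewise. Hence every $j\mathbf{m}$ lies in the $k$-type positive cone. Since $r$ is a homomorphism, $r(j\mathbf{m}) = j \cdot r(\mathbf{m}) = j$, and therefore
\[ T_f^{j\mathbf{m}}(x_0) \;=\; f^{r(j\mathbf{m})}(x_0) \;=\; f^{j}(x_0). \]

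Consequently the $k$-type forward orbit of $x_0$ under $T_f$ contains $\{f^j(x_0) : j \geq 1\}$, which is dense in $X$: it differs from the dense forward orbit $\{f^n(x_0) : n \geq 0\}$ only by the possible omission of $x_0$, and since $x_0$ itself lies in the closure of $\{f^j(x_0) : j \geq 1\} = f(\{f^n(x_0) : n \geq 0\})$ (using that $f$, being a transitive continuous self-map of a compact metric space, is surjective, so the image of a dense set under $f$ is dense). Therefore $x_0$ is a $k$-type transitive point of $T_f$, and $T_f$ is $k$-type transitive. The only conceptual step beyond bookkeeping is the observation that positive integer multiples of $\mathbf{m}$ stay in the $k$-cone, which is immediate from the sign-based definition of $>^k$; I do not foresee any genuine obstacle in this argument.
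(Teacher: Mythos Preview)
The paper does not supply its own proof of this statement; it is quoted in the Preliminaries as a result of Kamarudin and Dzul\textendash Kifli, so there is no in-paper argument to compare against. That said, your proof is the natural one, and its core step\textemdash that $j\mathbf{m}>^k\mathbf{0}$ with $r(j\mathbf{m})=j$ for every positive integer $j$\textemdash is exactly the Proposition the paper later isolates and proves in its section on induced $\mathbb{Z}^d$ actions.

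One minor caveat: your claim that a transitive continuous self-map of a compact metric space is automatically surjective is not true without further hypotheses (e.g.\ $X=\{0,1\}$ with $f\equiv 1$ has $0$ as a transitive point but is not onto, and indeed $\{f^j(0):j\ge 1\}=\{1\}$ is not dense). Under the paper's standing assumption that $f$ is a homeomorphism this is immediate, and more generally it holds whenever $X$ has no isolated points; either way the passage from $\{f^n(x_0):n\ge 0\}$ dense to $\{f^j(x_0):j\ge 1\}$ dense goes through. With that qualification your argument is complete.
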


\vspace{1cm}
\section{Results}

\subsection{Auslander-Yorke Dichotomy theorem}

In this section, we state and prove the Auslander-Yorke Dichotomy theorem for $k-$type dynamical systems. We first frame the required definitions and prove some preliminary results. As usual, we denote by $(X, T)$, a dynamical system where $T: \mathbb{Z}^d \times X \rightarrow X$ is a $\mathbb{Z}^d$ action on $X$.

\begin{definition}
    $T$ is called \emph{equicontinuous} if for every $\epsilon>0$ there exists some $\delta>0$ such that for every $x,y\in X$ with $d(x,y)<\delta$, we have $d(T^{n}(x),T^{n}(y))<\epsilon $ for all ${n}  \in \mathbb{Z}^d$, i.e., the family of maps $\{T^{n}: {n}\in \mathbb{Z}^d\}$ is uniformly equicontinuous.

    A point $x\in X$ is called an \emph{equicontinuous point} if for any $\epsilon>0$ there exists some $\delta>0$ such that for every $y\in X$ with $d(x,y)<\delta$, we get $d(T^{n}(x),T^{n}(y))<\epsilon $ for all $ {n}  \in \mathbb{Z}^d$. The set of all equicontinuous points in $(X, T)$ is denoted by $Eq(T)$. 
    A $k-$type transitive system is called \emph{almost equicontinuous} if there exists an equicontinuous point in it.
\end{definition}

\begin{definition}
    For $l\in\mathbb{N}$, define $G_l=\{x\in X:$ there is a neighbourhood $U$ of $x$ such that $x_1, x_2 \in U$ implies that $d(T^{n}(x_1),T^{n}(x_2))\leq \frac{1}{l} $ for all $ {n} \in \mathbb{Z}^d\}$
\end{definition}

Note that $G_l$ is an open negatively invariant subset of $X$ and they form a decreasing sequence of sets, i.e., $G_1\supset G_2 \supset \cdots $.

\begin{proposition}
    $\bigcap\limits_{l\in\mathbb{N}} G_l $ $= Eq(T)$.
\end{proposition}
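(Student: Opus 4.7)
The plan is a routine double-inclusion using the triangle inequality, so the main work is just matching up the quantifiers in the two definitions.

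For the inclusion $\bigcap_{l} G_l \subseteq Eq(T)$, I would take $x \in \bigcap_{l} G_l$ and let $\epsilon > 0$ be arbitrary. I pick $l \in \mathbb{N}$ with $1/l < \epsilon$. Since $x \in G_l$, there is a neighborhood $U$ of $x$ such that any two points in $U$ have $T$-orbits that stay within distance $1/l$ at every time $n \in \mathbb{Z}^d$. Choosing $\delta > 0$ with $B_d(x,\delta) \subseteq U$ and applying the defining property of $G_l$ to the pair $x_1 = x$, $x_2 = y$ for $y \in B_d(x,\delta)$ gives $d(T^n(x), T^n(y)) \leq 1/l < \epsilon$ for all $n \in \mathbb{Z}^d$. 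This is exactly equicontinuity at $x$.

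For the reverse inclusion $Eq(T) \subseteq \bigcap_{l} G_l$, I would take $x \in Eq(T)$ and fix any $l \in \mathbb{N}$. Applying the equicontinuity condition at $x$ with $\epsilon = 1/(2l)$ yields a $\delta > 0$ such that $d(x,y) < \delta$ implies $d(T^n(x), T^n(y)) < 1/(2l)$ for every $n \in \mathbb{Z}^d$. I then set $U = B_d(x, \delta)$ and, for any $x_1, x_2 \in U$, apply the triangle inequality:
\[
d(T^n(x_1), T^n(x_2)) \leq d(T^n(x_1), T^n(x)) + d(T^n(x), T^n(x_2)) < \tfrac{1}{2l} + \tfrac{1}{2l} = \tfrac{1}{l},
\]
which gives $x \in G_l$. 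Since $l$ was arbitrary, $x \in \bigcap_{l} G_l$.

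There is no real obstacle here; the only subtlety worth flagging is that in the definition of $G_l$ the estimate is non-strict ($\leq 1/l$), so I make sure to allow a little slack in the forward direction (picking $1/l < \epsilon$) and absorb the strict inequality from equicontinuity in the reverse direction by halving the tolerance to $1/(2l)$. Both directions use only the continuity-style definition of $Eq(T)$ and do not invoke any structure specific to the $\mathbb{Z}^d$-action, so the argument is formally identical to the $\mathbb{Z}$-action case.
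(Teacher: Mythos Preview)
Your proof is correct and matches the paper's own argument essentially line for line: the same choice of $l$ with $1/l<\epsilon$ for the forward inclusion, and the same $\epsilon=1/(2l)$ with the triangle inequality through $x$ for the reverse inclusion.
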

\begin{proof}

    Let $x\in\bigcap\limits_{l\in\mathbb{N}} G_l$ and $\epsilon>0$. Choose $l\in\mathbb{N}$ such that $\frac{1}{l}<\epsilon$. Since $x\in G_l$, there exists a neighbourhood $U$ of $x$ such that $x_1, x_2 \in U$ implies that $d(T^{{n}}(x_1),T^{{n}}(x_2))\leq \frac{1}{l} $ for all $ {n} \in \mathbb{Z}^d$. Choose a $\delta>0$ such that $B_d(x,\delta) \subset U$. Then, for every $ y\in X$ with $d(x,y)<\delta$, we get $d(T^{n}(x),T^{n}(y))<\epsilon $  for all ${n}  \in \mathbb{Z}^d$. Hence, $x\in Eq(T)$.

    Conversely, let $x\in Eq(T)$ and $l\in\mathbb{N}$. Choosing $\epsilon=\frac{1}{2l}$, we get $\delta>0$ such that, for every $ y\in X$,  $d(x,y)<\delta$ implies that $d(T^{n}(x),T^{n}(y))<\frac{1}{2l} $ for all $ {n}  \in \mathbb{Z}^d$. Take $U=B_d(x,\delta)$. Then, for every $ x_1, x_2 \in U$, we have $d(T^{n}(x_1),T^{n}(x))\leq \frac{1}{2l} $ and $d(T^{n}(x),T^{n}(x_2))\leq \frac{1}{2l} $  for all $ n \in \mathbb{Z}^d$ and thus $d(T^{n}(x_1),T^{n}(x_2))\leq \frac{1}{l} $, i.e., $x\in G_l$. Hence $x\in\bigcap\limits_{l\in\mathbb{N}} G_l$.
    
\end{proof}

\begin{proposition}
    If $(X,T)$ is $k-$type transitive, then $J^k(x)=X$ for every $x\in X$.
\end{proposition}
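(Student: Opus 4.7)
The plan is to verify the definition of $J^k(x)$ directly, using the equivalent open-set form of $k$-type transitivity stated in the preliminaries (valid when $X$ has no isolated points; otherwise one falls back on the transitive-point form by selecting a transitive point in $T^N(V)$, which exists by a standard Baire-category argument since the set of transitive points is residual). Fix $x, y \in X$ together with neighbourhoods $V$ of $x$ and $U$ of $y$, and some $N >^k 0$. I must produce $n >^k N$ and $z \in V$ such that $T^n(z) \in U$.

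First I observe that $T^N$ is a homeomorphism of $X$, so $T^N(V)$ is a nonempty open subset of $X$. Applying $k$-type transitivity to the pair of nonempty open sets $T^N(V)$ and $U$ yields some $m >^k 0$ with $T^m(T^N(V)) \cap U \neq \emptyset$. Choose $z \in V$ so that $T^{m+N}(z) = T^m(T^N(z)) \in U$ and set $n = m + N$. The relation $>^k$ is translation invariant: $n >^k N$ unfolds coordinate-wise to $(-1)^{k_i^b}(n_i - N_i) > 0$ for all $i$, i.e.\ to $m >^k 0$, which holds by construction. Thus $n >^k N$ and $T^n(z) \in U$ with $z \in V$, as required. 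Since $x, y, V, U, N$ were arbitrary, $y \in J^k(x)$ for every pair $x, y \in X$, so $J^k(x) = X$ for every $x \in X$.

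The conceptual content is therefore a single observation: because $T^N$ is invertible, the push-forward $T^N(V)$ is a nonempty open set to which transitivity applies, and the resulting index $m + N$ automatically exceeds $N$ in the $>^k$ order. The only mild obstacle is deciding whether to invoke the open-set characterisation of $k$-type transitivity (requiring no isolated points) or the transitive-point version; both route to the same conclusion, with the latter needing one extra sentence about residuality of transitive points.
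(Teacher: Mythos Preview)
Your argument is correct and follows a different route from the paper. The paper's proof deletes the finite set $\{T^{m}(x): 0<^k m \le^k N\}$ from $U$ and then appeals to density of a forward orbit to land in what remains, concluding that the hitting index must satisfy $n>^k N$. You instead push the neighbourhood $V$ forward by the homeomorphism $T^{N}$ and apply the open-set form of $k$-type transitivity to the pair $(T^{N}(V),U)$; translation invariance of $>^k$ then gives $m+N>^k N$ automatically. Your approach is tidier in two respects: it genuinely uses the neighbourhood $V$ of $x$ (the paper's argument effectively treats the arbitrary point $x$ as if it were itself a transitive point, never invoking $V$), and it sidesteps a partial-order subtlety---for $d>1$ the conditions $n>^k 0$ and $T^{n}(x)\notin\{T^{m}(x):0<^k m\le^k N\}$ do not by themselves force $n>^k N$, since $>^k$ is not a total order on $\mathbb{Z}^d$. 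The only caveat you already flag: the open-set characterisation needs $X$ to be perfect, and your fallback via residuality of transitive points handles the remaining case.
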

\begin{proof}
Let $x\in X$. Let $y\in X$, $U$ be a neighbourhood of $y$, $V$ be a neighbourhood of $x$ and ${n}>^k{0}$. Consider the set $U\setminus\{T^{{m}}(x): {0}<^k {m} \leq^k {N}\}$. This is open and non-empty. So there exists an $ {n}>^k{N}$ such that $T^{{n}}(x) \in U\setminus\{T^{{m}}(x): {0}<^k {m} \leq^k {N}\}$. This implies that $y\in J^k(x)$ and thus $J^k(x)=X$ for every $x\in X$.
\end{proof}

\begin{proposition}
    If $x$ is an equicontinuous point in $(X,T)$, then $L^k(x)=J^k(x)$.
\end{proposition}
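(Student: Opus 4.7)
The inclusion $L^k(x) \subseteq J^k(x)$ is immediate: given $y \in L^k(x)$ with witnessing sequence $(t_s)$ such that $T^{t_s}(x) \to y$ and $t_{s+1} >^k t_s$, set $x_s = x$ for all $s$; then $x_s \to x$ and $T^{t_s}(x_s) \to y$, so $y \in J^k(x)$. This inclusion does not use equicontinuity.

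For the reverse inclusion $J^k(x) \subseteq L^k(x)$, my plan is to start from a witnessing pair $(x_s), (t_s)$ for $y \in J^k(x)$ and upgrade it to a witness for $y \in L^k(x)$ by ``replacing'' $x_s$ with $x$. Concretely, fix $y \in J^k(x)$ and choose sequences $(x_s)_{s\in\mathbb{N}}$ in $X$ and $(t_s)_{s\in\mathbb{N}}$ in $\mathbb{Z}^d$ with $t_{s+1} >^k t_s$, $x_s \to x$, and $T^{t_s}(x_s) \to y$. I want to show $T^{t_s}(x) \to y$ along the same sequence of times.

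The key step uses equicontinuity at $x$. Given $\epsilon>0$, pick $\delta>0$ such that $d(x,z)<\delta$ implies $d(T^{n}(x), T^{n}(z))<\epsilon/2$ for every $n\in\mathbb{Z}^d$. Since $x_s \to x$, for all sufficiently large $s$ one has $d(x,x_s)<\delta$, and hence $d(T^{t_s}(x), T^{t_s}(x_s)) < \epsilon/2$. Combining with $T^{t_s}(x_s) \to y$, the triangle inequality gives $d(T^{t_s}(x), y) < \epsilon$ for all sufficiently large $s$. Thus $T^{t_s}(x) \to y$, and since the times $(t_s)$ still satisfy $t_{s+1}>^k t_s$, this witnesses $y \in L^k(x)$.

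I expect no real obstacle here: the only subtle point is that equicontinuity at $x$ is a uniform statement over all of $\mathbb{Z}^d$, which is exactly what lets us transfer the convergence at the nearby points $x_s$ to convergence at $x$ without losing control as $t_s$ varies. The ordering condition $t_{s+1}>^k t_s$ is preserved verbatim because we reuse the same sequence of times, so no extraction of a subsequence (which could destroy the order) is required.
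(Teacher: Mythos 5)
Your proof is correct and follows essentially the same route as the paper: equicontinuity at $x$ converts $d(x,x_s)<\delta$ into $d(T^{t_s}(x),T^{t_s}(x_s))<\epsilon/2$ uniformly in the time index, and the triangle inequality finishes the job. The only cosmetic difference is that you work directly with the sequential definition of $J^k(x)$ and reuse its time sequence $(t_s)$ (which conveniently already satisfies $t_{s+1}>^k t_s$), whereas the paper uses the equivalent neighbourhood characterization and produces the times $n>^k N$ one at a time.
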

\begin{proof}
    Clearly $L^k(x)\subseteq J^k(x)$. To show that $J^k(x)\subseteq L^k(x)$, let $y\in J^k(x)$. Let $U$ be any neighbourhood of $y$ and let ${n}>^k{0}$. Choose $\epsilon>0$ such that $B(y,\epsilon)\subset U$. 

    Since $x$ is an equicontinuous point, for every $\epsilon>0$, there is a $\delta>0 $ such that for any $ z\in X$, $d(z,x)<\delta$ implies that $d(T^{{n}}(z),T^{{n}}(x))< \frac{\epsilon}{2} $ for all $ {n}>^k{0}$.

    Since $y\in J^k(x)$, for every neighbourhood $ U$ of $y$, neighbourhood $V$ of $x$ and $ {N}>^k{0}$ there is an ${{n}}>^k{N}$ and a point $z\in V$ such that $T^{{n}}(z)\in U$. Taking $U=B(y,\epsilon/2)$ and $V=B(x,\delta)$, we have for any ${N}>^k{0}$, an $ {n}>^k{N}$ and $z\in B(x,\delta)$ such that $T^{n}(z)\in B(y,\epsilon/2) $, ie, $d(T^{n}(z),y)<\frac{\epsilon}{2}$. 

    So, $d(T^{{n}}(x),y)\leq d(T^{{n}}(x),T^{{n}}(z)) + d(T^{{n}}(z),y) <\epsilon$ and $T^{{n}}(x) \in B(y,\epsilon) \subset U$. This implies that $y\in L^k(x)$ and $J^k(x)\subset L^k(x)$; hence $L^k(x)=J^k(x)$.\end{proof}

\begin{theorem}
    
    Let $(X, T)$ be $k-$type transitive. 
    
    If $(X, T)$ is almost equicontinuous then the set of equicontinuous points coincides with the set of $k-$type transitive points (and so the set of equicontinuous points is a dense $G_{\delta}$).
    
    In particular, a minimal almost equicontinuous dynamical system is equicontinuous. $(X, T)$ has no equicontinuous points if and only if it is $k-$type sensitive. In particular, a minimal $\mathbb{Z}^d$ action is either equicontinuous or $k-$type sensitive.
\end{theorem}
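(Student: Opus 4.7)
The plan is to follow the classical Auslander--Yorke strategy, replacing limit and prolongation sets by their $k$-type analogues from the preliminaries and using the family $\{G_l\}$ of Definition 2. The proof splits into establishing the identity $Eq(T) = \mathrm{Trans}^k(T)$ of equicontinuous and $k$-type transitive points under almost equicontinuity, and then the dichotomy $Eq(T) = \emptyset \Leftrightarrow T$ is $k$-type sensitive; the minimal-case corollaries follow.

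For the almost-equicontinuous half, take $x \in Eq(T)$; Proposition 2 gives $J^k(x) = X$ and Proposition 3 gives $L^k(x) = J^k(x)$, whence $L^k(x) = X$. Since every witnessing sequence $(t_s)$ for $L^k$ satisfies $t_{s+1} >^k t_s$, its tail lies in the forward cone $\{n : n >^k 0\}$, so every $y \in X$ is a limit of the $k$-forward orbit of $x$; hence $x$ is $k$-type transitive. Conversely, given a transitive $y$ and any $l \in \mathbb{N}$, almost equicontinuity supplies $x_0 \in Eq(T) \subseteq G_l$ together with an open neighbourhood $U \subseteq G_l$; transitivity of $y$ provides $m >^k 0$ with $T^m(y) \in U \subseteq G_l$, and the negative invariance of $G_l$ noted after Definition 2 yields $y = T^{-m}(T^m y) \in G_l$. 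Intersecting over $l$ gives $y \in Eq(T)$. Density of $Eq(T)$ as a $G_\delta$ is inherited from $\mathrm{Trans}^k(T)$, and in the minimal case a standard finite-subcover argument upgrades pointwise equicontinuity to uniform equicontinuity on the compact metric $X$.

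For the dichotomy, the direction \textit{$k$-type sensitive} $\Rightarrow Eq(T) = \emptyset$ is immediate, since the sensitivity constant $r$ violates the equicontinuity clause at $\epsilon = r$ for every $x$. For the converse, suppose $\bigcap_l G_l = \emptyset$. Each $G_l$ is open and fully invariant (the symmetric definition yields invariance in both directions), and any nonempty invariant open set in a $k$-type transitive system is dense: for a nonempty open $W$, transitivity produces $n >^k 0$ with $T^n(G_l) \cap W \neq \emptyset$, after which forward invariance forces $G_l \cap W \neq \emptyset$. Baireness of the compact metric $X$ then forbids every $G_l$ from being simultaneously nonempty and dense, so some $G_{l_0}$ is empty; unpacking, every open set contains a pair $x_1, x_2$ with $d(T^n x_1, T^n x_2) > 1/l_0$ for some $n \in \mathbb{Z}^d$, and a triangle-inequality splitting yields $d(T^n x, T^n x_i) > 1/(2l_0)$ for a suitable $x_i$.

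The main obstacle will be that $k$-type sensitivity demands the separating index to lie in the forward cone $\{n : n >^k 0\}$, while the emptiness of $G_{l_0}$ only supplies a separating $n$ somewhere in $\mathbb{Z}^d$. To close this gap I would run the same Baire-plus-invariance argument with the forward-time refinement $G_l^+$ obtained by replacing \emph{for all $n \in \mathbb{Z}^d$} by \emph{for all $n >^k 0$} in Definition 2; exploiting the $\mathbb{Z}^d$ group structure and $k$-type transitivity, one shows that in the transitive setting $\bigcap_l G_l^+$ still equals $Eq(T)$, so the hypothesis transfers to some $G_{l_0}^+ = \emptyset$, from which $k$-type sensitivity with constant $1/(2l_0)$ is read off directly. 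The final minimal dichotomy follows by combining the almost-equicontinuous case with the $Eq(T) = \emptyset$ case.
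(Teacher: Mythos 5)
Your overall architecture is the paper's: transitive points land in $Eq(T)=\bigcap_l G_l$ via the negative invariance of the open sets $G_l$, the reverse inclusion uses $J^k(x)=X$ together with $L^k(x)=J^k(x)$ at equicontinuity points, and the dichotomy is run through the emptiness of some $G_{l_0}$. You are in fact more careful than the paper in one spot: the paper simply asserts that $\bigcap_l G_l=\emptyset$ forces some $G_{l_0}=\emptyset$, whereas you supply the density-of-invariant-open-sets-plus-Baire argument that this step actually needs.

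The genuine gap is in your repair of the forward-cone mismatch. You correctly observe that $G_{l_0}=\emptyset$ only produces a separating index $n\in\mathbb{Z}^d$ rather than $n>^k 0$ (the paper's proof silently elides the same point), but your fix hinges on the unproven assertion that $\bigcap_l G_l^+=Eq(T)$ in the $k$-type transitive setting, i.e., that a forward ($n>^k 0$) equicontinuity point is automatically an equicontinuity point over all of $\mathbb{Z}^d$; that claim is precisely the content of the gap, not a routine verification, and you give no argument for it. Moreover, the Baire step for the sets $G_l^+$ requires them to be dense when nonempty, which in your scheme would come from negative invariance; but $T^{-m}$ applied to a witnessing neighbourhood only controls $\mathrm{diam}(T^{n}(V))$ for $n>^k m$, leaving the slab $\{n: n>^k 0,\ n\not>^k m\}$ uncontrolled, and for $d\geq 2$ this slab is infinite, so it cannot be absorbed by continuity of finitely many maps as in the classical $d=1$ argument. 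As written, the implication ``no equicontinuous points $\Rightarrow$ $k$-type sensitive'' is therefore not closed; you would need either to prove the forward/two-sided equivalence of equicontinuity points under $k$-type transitivity, or to set up $Eq(T)$ and the $G_l$ over the forward cone from the outset.
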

\begin{proof}
    Let $(X,T)$ be an almost equicontinuous system, i.e., $ Eq(T)=\bigcap\limits_{l\in\mathbb{N}} G_l \neq \emptyset$ which implies that for all $l\in \mathbb{N}, ~ G_l\neq \emptyset$. Let $x$ be a $k-$type transitive point. Since $G_l$ is an open set, there exists {$n_l$}$>^k0$ such that $T^{n_l}(x)\in G_l$ and thus $ x\in G_l$, as $G_l$ is negatively invariant. Hence $x \in \bigcap\limits_{l\in\mathbb{N}} G_l= Eq(T)$.
    Conversely, if $(X,T)$ is $k-$type transitive then for every $x\in X, ~J^k(x)=X$. If $x$ is an equicontinuous point then $L^k(x)=J^k(x)$. Thus $L^k(x)=X$ and $x$ is a $k-$type transitive point.

    If $(X,T)$ has no equicontinuous points, then $\bigcap G_l$ is empty and there is an $l\in \mathbb{N}$ such that $G_l = \emptyset$. So there exists $\delta=\frac{1}{l} >0$ such that for any non empty open set $U\subset X$, there are points $ x_1,x_2\in U$ such that $d(T^{{n}}(x_1),T^{{n}}(x_2)) > \delta$ for some ${n}>^k{0}$. Hence $(X,T)$ is $k-$type sensitive.

    Conversely, let $(X,T)$ be $k-$type sensitive. Assume that there is an equicontinuous point in $X$, say $x$. Then $x\in\bigcap\limits_{l\in\mathbb{N}} G_l$, i.e., for every $l \in \mathbb{N},$ there is a neighbourhood $U_l$ of $x$ such that for any two points $x_1,x_2\in U_l, ~d(T^{{n}}(x_1),T^{{n}}(x_2)) \leq \frac{1}{l} $ for all $ {n}>^k{0}$. This implies that $(X, T)$ is not $k-$type sensitive, which is a contradiction. Hence $(X, T)$ has no equicontinuous points.
\end{proof}

\vspace{1cm}

\subsection{$k-$type Li Yorke Sensitivity}

\begin{definition} Let $(X,T)$ be a dynamical system given by a $\mathbb{Z}^d$ action $T$ on $X$. The set of all \emph{$k-$type proximal pairs} of $T$ is defined as

\[k-Prox(T) = \bigcap_{\epsilon>0}\bigcup_{\substack{{n}>^k {0} \\ {n}\in \mathbb{Z}^d}} \left[ T^{{-n}}\times T^{{-n}} (V_{\epsilon}) \right]. \]
Any pair $(x,y) \in k-Prox(T) $ is called a $k-$type proximal pair of $T$. $k$-type proximal cell of $x\in X$ is $k-Prox(T)(x)= \{ y\in X | (x,y) \in k-Prox(T) \} $.
\end{definition} 

\begin{proposition}
    $(x,y) \in k-Prox(T)$ if and only if there exists a sequence $({t_s})_{s\in \mathbb{N}} $ in $\mathbb{Z}^d$ with $ {t_{s+1}} >^k {t_{s}}  $ such that $d(T^{{t_s}}(x) , T^{{t_s}}(y)) \rightarrow 0 $ as $s\rightarrow \infty$.
\end{proposition}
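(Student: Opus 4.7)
I will prove the equivalence in two directions; the easy one $(\Leftarrow)$ is direct from the definition, while $(\Rightarrow)$ rests on a small invariance lemma.

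\emph{Direction $(\Leftarrow).$} Given a sequence $(t_s)$ with $t_{s+1}>^k t_s$ and $d(T^{t_s}(x),T^{t_s}(y))\to 0$, observe that for each coordinate $i$ the integer sequence $(-1)^{k_i^b}(t_s)_i$ is strictly increasing, by the definition of $>^k$, and hence diverges to $+\infty$. Consequently $t_s>^k 0$ for all $s$ beyond some $s_0$. Then for every $\epsilon>0$ one can pick $s\geq s_0$ with $d(T^{t_s}(x),T^{t_s}(y))<\epsilon$, so $(x,y)\in T^{-t_s}\times T^{-t_s}(V_\epsilon)\subset\bigcup_{n>^k 0}T^{-n}\times T^{-n}(V_\epsilon)$. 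Intersecting over all $\epsilon>0$ yields $(x,y)\in k-Prox(T)$.

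\emph{Direction $(\Rightarrow).$} The plan is to build $(t_s)$ inductively so that $t_s>^k t_{s-1}$ and $d(T^{t_s}(x),T^{t_s}(y))<1/s$. For the inductive step I want to apply the definition of $k-Prox(T)$ to the translated pair $(T^{t_{s-1}}(x),T^{t_{s-1}}(y))$, which requires the following invariance lemma: if $(x,y)\in k-Prox(T)$ then $(T^{n_0}(x),T^{n_0}(y))\in k-Prox(T)$ for every $n_0\in\mathbb{Z}^d$. To prove it I will use uniform continuity of the homeomorphism $T^{n_0}$ on the compact space $X$: given $\epsilon>0$, pick $\delta>0$ with $d(a,b)<\delta\Rightarrow d(T^{n_0}(a),T^{n_0}(b))<\epsilon$; then by $(x,y)\in k-Prox(T)$ choose $p>^k 0$ with $d(T^p(x),T^p(y))<\delta$, and use commutativity of the action, $T^p\circ T^{n_0}=T^{n_0}\circ T^p$, to conclude $d(T^p(T^{n_0}(x)),T^p(T^{n_0}(y)))<\epsilon$. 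Thus the \emph{same} witness $p>^k 0$ confirms that the translated pair lies in $k-Prox(T)$.

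With the lemma in hand, the induction runs cleanly. Start by picking any $t_1>^k 0$ with $d(T^{t_1}(x),T^{t_1}(y))<1$. Assuming $t_1<^k\cdots<^k t_{s-1}$ have been chosen, the invariance lemma applied with $n_0=t_{s-1}$ gives $(T^{t_{s-1}}(x),T^{t_{s-1}}(y))\in k-Prox(T)$; applying its defining condition at tolerance $1/s$ yields $m>^k 0$ with $d(T^m(T^{t_{s-1}}(x)),T^m(T^{t_{s-1}}(y)))<1/s$. Set $t_s:=m+t_{s-1}$; then $t_s-t_{s-1}=m>^k 0$ and $d(T^{t_s}(x),T^{t_s}(y))<1/s$, closing the induction.

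\emph{Main obstacle.} The only nontrivial point is the invariance lemma. A naive attempt to reindex a witness $n$ as $n-n_0$ fails, because $n-n_0$ need not lie in the $>^k$-positive cone. The correct move is to propagate smallness \emph{forward} under $T^{n_0}$ via uniform continuity, so that the same $p$ serves as witness for both pairs; commutativity of the $\mathbb{Z}^d$-action is essential here. Once this subtlety is handled, the remainder is a routine inductive extraction.
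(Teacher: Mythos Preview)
Your proof is correct. In fact, for the forward direction it is more complete than the paper's own argument: the paper simply writes ``for $\epsilon=1/s$ choose $t_s\in\mathbb{Z}^d$ such that $t_{s+1}>^k t_s$ and $(x,y)\in T^{-t_s}\times T^{-t_s}(V_\epsilon)$'' without explaining why an \emph{increasing} choice is possible. Your invariance lemma (that $k$-proximality is preserved under translation by any $T^{n_0}$, proved via uniform continuity of $T^{n_0}$ and commutativity of the action) is exactly the ingredient that justifies this step, and your inductive construction $t_s=t_{s-1}+m$ makes the monotonicity transparent. The paper's proof and yours have the same skeleton (take $\epsilon=1/s$ and extract witnesses), but your version identifies and resolves the one genuine subtlety.

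For the backward direction, both proofs are essentially identical; you are again slightly more explicit in noting that $t_{s+1}>^k t_s$ forces each signed coordinate $(-1)^{k_i^b}(t_s)_i$ to diverge, so that $t_s>^k 0$ eventually --- a point the paper asserts without comment.
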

\begin{proof}
    Let $(x,y) \in k-Prox(T)$, i.e., for every $\epsilon >0$, there is an ${n}>^k{0}$ such that $(x,y)\in T^{{-n}}\times T^{{-n}} (V_{\epsilon})$.  
    For $\epsilon=\frac{1}{s}$, $s\in \mathbb{N}$ choose ${t_s} \in\mathbb{Z}^d$ such that  $ {t_{s+1}} >^k {t_{s}} $ and $(x,y)\in T^{{-t_s}}\times T^{{-t_s}} (V_{\epsilon})$. 
    This implies that there exists a sequence $({t_s})_{s\in \mathbb{N}} $ in $\mathbb{Z}^d$ with $ {t_{s+1}} >^k {t_{s}}  $ such that $d(T^{{t_s}}(x) , T^{{t_s}}(y)) \rightarrow 0 $ as $s\rightarrow \infty$.

    Conversely, suppose there exists a sequence $({t_s})_{s\in \mathbb{N}} $ in $\mathbb{Z}^d$ with $ {t_{s+1}} >^k {t_{s}}  $ such that $d(T^{{t_s}}(x) , T^{{t_s}}(y)) \rightarrow 0 $ as $s\rightarrow \infty$. Then for every $\epsilon >0$, there is a ${t_s} >^k{0}$ such that $d(T^{{t_s}}(x) , T^{{t_s}}(y)) < \epsilon$ i.e., $(x,y) \in T^{{-t_s}}\times T^{{-t_s}} (V_{\epsilon})$.  Hence $(x,y) \in k-Prox(T)$.
\end{proof}

\begin{definition} For any $\epsilon>0$, \[ k-Asym_{\epsilon}(T)= \bigcup_{\substack{{n}>^k {0} \\ {n}\in \mathbb{Z}^d}} T^{{-n}}\times T^{{-n}} \left( \bigcap_{\substack{{m}>^k {0} \\ {m}\in \mathbb{Z}^d}} [ T^{{-m}}\times T^{{-m}} (\overline{V_{\epsilon}})] \right) \]

and  \[ k-Asym(T) = \bigcap_{\epsilon>0} k-Asym_{\epsilon}(T).   \]
We call elements $(x,y)$ in $k-Asym(T)$ as \emph{$k-$type asymptotic pairs} of $T$. $k$-type $\epsilon-$asymptotic cell of $x\in X$ is $k-Asym_{\epsilon}(T)(x)= \{ y\in X | (x,y) \in k-Asym_{\epsilon}(T) \} $ and $k$-type asymptotic cell of $x\in X$ is $k-Asym(T)(x)= \{ y\in X | (x,y) \in k-Asym(T) \}. $

\end{definition}

\begin{definition}
    $(X,T)$ has $k-$type Li-Yorke Sensitivity if there is an $\epsilon>0$ such that for every $x\in X$, $x \in \overline{k-Prox(T)(x) \setminus k-Asym_{\epsilon}(T)(x)}$.
\end{definition}

\begin{theorem} \label{equivalences}
    The following are equivalent for a dynamical system $(X,T)$:
    \begin{enumerate}
        \item $(X,T)$ is $k-$type sensitive.
        \item $\exists \delta>0$ such that $k-Asym_{\delta}(T)$ is a first category subset of $X\times X$.
        \item $\exists \delta>0$ such that $\forall x\in X, k-Asym_{\delta}(T)(x)$ is a first category subset of $X$.
        \item $\exists \delta>0$ such that $\forall x\in X,$ $x\in \overline{X \setminus k-Asym_{\delta}(T)(x)}$.
        
    \end{enumerate}
\end{theorem}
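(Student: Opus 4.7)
The plan is to prove this as a two-arc structure: I will establish $(1)\Leftrightarrow(2)$ directly, and separately traverse the cycle $(1)\Rightarrow(3)\Rightarrow(4)\Rightarrow(1)$. The unifying structural observation is that, setting
\[
A_n := (T^{-n}\times T^{-n})\!\left(\bigcap_{{m}>^k{0}}(T^{-m}\times T^{-m})(\overline{V_\delta})\right),
\]
each $A_n$ is closed in $X\times X$ (image of a closed set under the homeomorphism $T^{-n}\times T^{-n}$), so that $k-Asym_\delta(T)=\bigcup_{{n}>^k{0}}A_n$ is $F_\sigma$; the same argument writes each section $k-Asym_\delta(T)(x)$ as a countable union of closed sets $E_{n,x}:=\{y\in X:d(T^p(x),T^p(y))\le\delta\text{ for all }p>^k n\}$. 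The key reformulation is that $(y_1,y_2)\in A_n$ if and only if $d(T^p(y_1),T^p(y_2))\le\delta$ for every $p>^k n$.

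Before the main implications I would record a mild strengthening of $k$-type sensitivity: if $T$ is $k$-type sensitive with constant $\delta_s$, then for every nonempty open $U\subset X$ and every ${N}>^k{0}$, there exist $x_1,x_2\in U$ and ${p}>^k{N}$ with $d(T^p(x_1),T^p(x_2))>\delta_s$. This follows by applying the raw sensitivity definition to the open set $T^N(U)$ and shifting the resulting witness by $N$; it is exactly what lets us push the time witness past a prescribed $n$ below.

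For $(1)\Rightarrow(2)$ and $(1)\Rightarrow(3)$ I would take $\delta:=\delta_s/3$ and show that each closed piece $A_n$ (respectively each section $E_{n,x}$) has empty interior. If, on the contrary, a basic open box $B(x_0,\epsilon)\times B(y_0,\epsilon)\subset A_n$, then triangulating through an arbitrary point of $B(y_0,\epsilon)$ yields $d(T^p(u_1),T^p(u_2))\le 2\delta$ for all $u_1,u_2\in B(x_0,\epsilon)$ and all $p>^k n$; the strengthened sensitivity applied with $N=n$ then produces a pair contradicting $2\delta<\delta_s$. The section version is parallel, triangulating through $x$. For the converse $(2)\Rightarrow(1)$ I would argue by contrapositive: failure of $k$-type sensitivity at every threshold $\delta>0$ supplies a nonempty open $U$ with $U\times U\subset k-Asym_\delta(T)$, so the latter has nonempty interior and hence cannot be first category in a Baire space.

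The remaining implications are short. For $(3)\Rightarrow(4)$, $X$ is a compact metric space and therefore Baire, so the complement of any first category subset is dense, forcing $x\in\overline{X\setminus k-Asym_\delta(T)(x)}$ for every $x$. For $(4)\Rightarrow(1)$, given $x$ and $\epsilon>0$ I can pick $y\in B(x,\epsilon)$ with $(x,y)\notin k-Asym_\delta(T)$; the $F_\sigma$ description then forces, for every ${n}>^k{0}$, the existence of ${p}>^k{n}$ with $d(T^p(x),T^p(y))>\delta$, and any such $p$ certifies $k$-type sensitivity with constant $\delta$. The one substantive technical point is the quantifier mismatch between the defining condition of $A_n$ (which constrains behaviour only past $n$) and the single witness $p>^k{0}$ supplied by bare sensitivity; the preliminary strengthening of sensitivity is designed precisely to bridge this gap, after which the remainder is merely the triangle inequality combined with the Baire category theorem.
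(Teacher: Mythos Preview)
Your proof is correct. Both you and the paper rest on the same $F_\sigma$ decomposition $k\text{-}Asym_\delta(T)=\bigcup_{n>^k 0}A_n$ together with Baire category, but the logical wiring and the key technical device differ. The paper argues almost entirely by contrapositive ($\lnot 1\Rightarrow\lnot 2$, $\lnot 2\Rightarrow\lnot 3$, $\lnot 3\Rightarrow\lnot 1$, $\lnot 1\Rightarrow\lnot 4$, plus $3\Rightarrow 4$): to obtain $1\Rightarrow 3$ it starts from an $x$ with $k\text{-}Asym_\delta(T)(x)$ of second category, extracts an open $U$ inside some section $k\text{-}C_{n,\delta}(x)$, triangulates to get $U\times U\subset k\text{-}C_{n,2\delta}$, and then invokes continuity of the finitely many maps $T^m$ with $0\le^k m\le^k n$ to shrink $U$ and contradict sensitivity. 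You instead isolate a reusable ``time-shifted'' strengthening of sensitivity (apply the defining condition to $T^N(U)$ and pull back by the homeomorphism $T^{-N}$) and use it to show \emph{directly} that every $A_n$ and every section $E_{n,x}$ is nowhere dense. Your route has two advantages: the strengthening is a clean stand-alone lemma, and it sidesteps a subtlety in the paper's shrinking step, namely that for $d\ge 2$ the set $\{i:i>^k 0\}$ is not covered by $\{i:0<^k i\le^k n\}\cup\{i:i>^k n\}$ (the order $>^k$ is only partial), so handling those finitely many $T^m$ does not obviously control all positive times; your shift argument avoids that issue entirely. The remaining steps ($2\Rightarrow 1$ by contrapositive, $3\Rightarrow 4$ via Baire, $4\Rightarrow 1$ by picking a witness) coincide with the paper's in substance.
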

\begin{proof}
    Define $k-C_{{n},\delta} = T^{{-n}}\times T^{{-n}} \left( \bigcap_{\substack{{m}>^k {0} \\ {m}\in \mathbb{Z}^d}} [ T^{{-m}}\times T^{{-m}} (\overline{V_{\delta}})] \right) $. \vspace{0.3cm} Then $(x,y)$ lies in the closed set $k-C_{{n},\delta}$ if and only if $d(T^{{i}}(x),T^{{i}}(y))\leq \delta $ for all $ {i}\geq ^k {n}$.
    So, we have $k-Asym_{\delta}(T)=\bigcup_{\substack{{m}>^k {0} \\ {m}\in \mathbb{Z}^d}} k-C_{{n},\delta}$ and $k-Asym_{\delta}(T)(x)=\bigcup_{\substack{{m}>^k {0} \\ {m}\in \mathbb{Z}^d}} k-C_{{n},\delta}(x)$. \vspace{0.3cm}

    \textbf{(Not 1 $\implies$ Not 2)}
    $(X,T)$ is not $k$- type sensitive, i.e., for all $\delta>0$, there exists $x\in X$ and $\epsilon>0$ such that for every $y\in B_{d}(x,\epsilon)$ and ${n}>^k{0}$, $d(T^{{n}}(x),T^{{n}}(y))\leq \frac{\delta}{2}$. 
    Thus, for every $\delta>0$, there exists an open non empty subset $U$ of $X$ such that $U\times U \subset \bigcap_{{n}>^k{0}} T^{{-n}}\times T^{{-n}} (\overline{V_{\delta}}) \subset k-Asym_{\delta}(T)$. Hence $k-Asym_{\delta}(T)$ is not a first category set for all $\delta >0$.

    \textbf{(Not 1 $\implies$ Not 4)}
    $(X,T)$ is not $k$- type sensitive, i.e., for every $\delta>0$ there exists an open non-empty subset $U$ of $X$ such that $U \subset k-Asym_{\delta}(T)(x)$ for $x\in U$. Thus for every $\delta>0$ there exists $x\in X$ and an open neighbourhood $U$ of $x$ such that $U \bigcap (X\setminus k-Asym_{\delta}(T)(x)) = \emptyset$. This implies that for all $\delta>0$ there exists $x\in X$ such that $x\not\in \overline{X \setminus k-Asym_{\delta}(T)(x)}$.

    \textbf{(Not 2 $\implies$ Not 3)}
    If $k-Asym_{\delta}(T)$ is not of the first category, then by Baire Category Theorem, there is an ${n}$ such that $k-C_{{n},\delta}$ has non-empty interior. In other words, there exist non-empty open sets $U, V$ such that $U\times V \subset k-C_{{n},\delta}$. If $x\in U$, then $V \subset k-C_{{n},\delta}(x) \subset k-Asym_{\delta}(T)(x)$. Hence $ k-Asym_{\delta}(T)(x)$ is not of the first category.

    \textbf{(Not 3 $\implies$ Not 1)}
     If $k-Asym_{\delta}(T)(x)$ is not of first category, then by Baire Category Theorem, there exists ${n}>^k{0}$ and an open non-empty set $U$ such that $U \subset k-C_{{n},\delta}(x)$. By triangle inequality, $U\times U \subset k-C_{{n},2\delta}$. By continuity of $T^{{m}}$, for ${0}\leq^k {m} \leq^k {n}$, we can shrink $U$ to a non empty open set $V$ such that $d(T^{{i}}(x),T^{{i}}(y))\leq 2\delta $ for all $ {i}>^k {0}$ for $x,y\in V$. Hence $(X,T)$ is not $k$- type sensitive.

     \textbf{(3 $\implies$ 4)} 
     Given that there exists $\delta>0$ such that for all $x\in X,$ $ k-Asym_{\delta}(T)(x)$ is a first category subset of $X$. Take any open neighbourhood $U$ of $x$, then $U\not\subset k-Asym_{\delta}(T)(x)$ which implies that $U $ intersects $ (X\setminus k-Asym_{\delta}(T)(x))$. Hence $x\in \overline{X \setminus k-Asym_{\delta}(T)(x)}$. \end{proof}

\begin{proposition} \label{sensitive-LYsensitive}
If $(X,T)$ is $k-$type Li Yorke sensitive then it is $k-$type sensitive. If $(X,T)$ is $k-$type  sensitive and for every $x\in X$,  $\overline{k-Prox(T)(x)}=X$ then it is $k-$type Li Yorke sensitive.    
\end{proposition}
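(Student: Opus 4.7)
The plan is to handle the two implications separately, in each case reducing to the equivalences recorded in Theorem \ref{equivalences}.

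For the first implication, assume $(X,T)$ is $k$-type Li--Yorke sensitive with constant $\epsilon>0$. The inclusion $k-Prox(T)(x)\setminus k-Asym_\epsilon(T)(x)\subseteq X\setminus k-Asym_\epsilon(T)(x)$ holds for every $x\in X$, so taking closures immediately yields $x\in\overline{X\setminus k-Asym_\epsilon(T)(x)}$ for every $x$. This is exactly condition (4) of Theorem \ref{equivalences}, so $(X,T)$ is $k$-type sensitive.

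For the second implication, assume $(X,T)$ is $k$-type sensitive and $\overline{k-Prox(T)(x)}=X$ for every $x$. Theorem \ref{equivalences} supplies $\delta>0$ such that $k-Asym_\delta(T)(x)$ is first category in $X$ for every $x$. I will prove Li--Yorke sensitivity with $\epsilon=\delta$, in fact establishing the stronger statement that $k-Prox(T)(x)\setminus k-Asym_\delta(T)(x)$ is dense in $X$. The key ingredient is that each proximal cell is a $G_\delta$ subset of $X$: from the definition,
\[ k-Prox(T)(x)=\bigcap_{s\in\mathbb{N}}\;\bigcup_{n>^k 0}\{y\in X:d(T^n(x),T^n(y))<1/s\}, \]
and every inner union is open by continuity of $T^n$. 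Together with the density hypothesis, this makes $k-Prox(T)(x)$ a dense $G_\delta$, so its complement $X\setminus k-Prox(T)(x)$ is meager. Therefore
\[ \bigl(X\setminus k-Prox(T)(x)\bigr)\cup k-Asym_\delta(T)(x) \]
is a union of two meager sets, hence meager, and since $X$ is a compact metric space (thus a Baire space) its complement $k-Prox(T)(x)\setminus k-Asym_\delta(T)(x)$ is residual, in particular dense in $X$.

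The main subtlety is the second step, where combining a dense set (the proximal cell) with a residual set (the complement of the meager $\delta$-asymptotic cell) has to yield a dense set; this is not automatic for arbitrary dense sets (cf.\ $\mathbb{Q}$ and $\mathbb{R}\setminus\mathbb{Q}$), so the upgrade of the proximal cell from merely dense to dense $G_\delta$ (hence residual) is precisely what allows the Baire category theorem to close the argument.
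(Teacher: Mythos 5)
Your proof of the first implication is exactly the paper's: the set inclusion $k-Prox(T)(x)\setminus k-Asym_{\epsilon}(T)(x)\subseteq X\setminus k-Asym_{\epsilon}(T)(x)$ plus condition (4) of Theorem \ref{equivalences}. For the second implication you take a genuinely different, and in fact more careful, route. The paper gets from Theorem \ref{equivalences} that $k-Asym_{\epsilon}(T)(x)$ is of first category and then asserts that this makes it \emph{nowhere dense}, after which ``dense minus nowhere dense is dense'' finishes the argument; but first category does not imply nowhere dense (the $\epsilon$-asymptotic cell is an increasing countable union of closed nowhere dense sets, which can still be dense, as $\mathbb{Q}=\bigcup_n F_n$ with $F_n$ finite shows), so the paper's argument as written has a gap at precisely the point you flag. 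Your fix --- observing that $k-Prox(T)(x)=\bigcap_{s}\bigcup_{n>^k 0}\{y: d(T^{n}(x),T^{n}(y))<1/s\}$ is a $G_{\delta}$, hence under the density hypothesis a residual set, and then subtracting the meager cell $k-Asym_{\delta}(T)(x)$ and invoking Baire in the compact metric space $X$ --- is correct and is the standard Akin--Kolyada-style argument; it buys a valid proof (indeed the stronger conclusion that the proximal-not-asymptotic cell is residual) at the cost of one extra observation about the Borel complexity of the proximal cell. The only cosmetic issue is the clash between $\delta$ as the sensitivity constant and $\delta$ in ``$G_{\delta}$,'' which you should rename.
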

\begin{proof}
    $(X,T)$ is $k-$type Li Yorke sensitive 
    
\hspace{2cm}    $\implies$ $\exists\epsilon>0$ such that $\forall x\in X$, $x \in \overline{k-Prox(T)(x) \setminus k-Asym_{\epsilon}(T)(x)}$ 
    
\hspace{2cm}    $\implies$ $\exists \epsilon>0$ such that $\forall x\in X,$ $x\in \overline{X \setminus k-Asym_{\epsilon}(T)(x)}$ 
    
\hspace{2cm}    $\implies$ $(X,T)$ is $k-$type sensitive (by Theorem \ref{equivalences}). 

    Now, when $(X,T)$ is $k-$type sensitive, there exists an $\epsilon>0$ such that for every $x\in X, ~k-Asym_{\epsilon}(T)(x)$ is a first category subset of $X$ (by Theorem \ref{equivalences}). This implies $k-Asym_{\epsilon}(T)(x)$ is nowhere dense. Since for every $x\in X,$ $\overline{k-Prox(T)(x)}=X$, we get that for all $x \in X,$ ${k-Prox(T)(x) \setminus k-Asym_{\epsilon}(T)(x)}$ is dense in $X$. Hence $(X, T)$ is $k-$type Li Yorke sensitive.
\end{proof}

\begin{proposition}
    If $(X, T)$ is $k-$type  sensitive and has a fixed point which is the unique minimal subset, then it is  $k-$type Li Yorke sensitive.
\end{proposition}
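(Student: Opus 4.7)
My plan is to reduce the claim to showing that every pair $(x,y) \in X \times X$ is a $k$-type proximal pair of $T$, and then derive this by a product-space argument exploiting the uniqueness of the minimal set. Since $(X,T)$ is already assumed $k$-type sensitive, Proposition \ref{sensitive-LYsensitive} tells us it suffices to verify $\overline{k-Prox(T)(x)}=X$ for every $x\in X$; in fact I will prove the stronger statement $k-Prox(T)(x)=X$ for every $x\in X$.

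First I will establish the auxiliary claim that $\{(p,p)\}$ is the unique minimal subset of the product system $(X\times X, T\times T)$, where $T\times T$ denotes the diagonal $\mathbb{Z}^d$-action. Given any $(T\times T)$-minimal $M\subseteq X\times X$, the projection $\pi_1(M)$ is a closed $T$-invariant subset of $X$, hence contains a $T$-minimal subset, which by hypothesis must be $\{p\}$. Therefore $(p,u)\in M$ for some $u\in X$, and the $(T\times T)$-orbit closure of $(p,u)$ equals $\{p\}\times \overline{\{T^n(u):n\in \mathbb{Z}^d\}}$. Minimality of $M$ forces this closure to equal $M$, and a second application of minimality forces $\overline{\{T^n(u):n\in \mathbb{Z}^d\}}$ to be $T$-minimal, so $u=p$.

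For arbitrary $x,y\in X$ I will then consider the $k$-type limit set of $(x,y)$ under the diagonal action, i.e., the set $\tilde{L}^k(x,y)$ of all $(u,v)\in X\times X$ arising as limits of $(T^{t_s}(x), T^{t_s}(y))$ along sequences with $t_{s+1}>^k t_s$. This set is nonempty by compactness: take integer multiples of the sign vector $((-1)^{k_1^b},\ldots,(-1)^{k_d^b})$ as a starting sequence (it lies in the positive cone $\{n : n>^k 0\}$) and extract a convergent subsequence, which remains strictly $>^k$-increasing. It is $\mathbb{Z}^d$-invariant under $T\times T$, since translating a strictly $>^k$-increasing sequence by any fixed $m\in\mathbb{Z}^d$ preserves strict $>^k$-increase. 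Closedness is the delicate step; I will handle it by a diagonal extraction, using that every $>^k$-increasing sequence must go to infinity in each signed coordinate (because successive differences lie in the positive cone, each signed coordinate grows at least linearly in the index), which is exactly the growth needed to splice successive witnesses together while maintaining strict $>^k$-increase in the combined sequence.

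Once $\tilde{L}^k(x,y)$ is known to be nonempty, closed, and $\mathbb{Z}^d$-invariant under $T\times T$, Zorn's lemma produces a $(T\times T)$-minimal subset inside it, which by the auxiliary claim must be $\{(p,p)\}$. This yields a sequence $t_s$ with $t_{s+1}>^k t_s$ along which both $T^{t_s}(x)\to p$ and $T^{t_s}(y)\to p$, hence $d(T^{t_s}(x), T^{t_s}(y))\to 0$, so $(x,y)\in k-Prox(T)$ by the sequential characterization of $k$-type proximal pairs. I expect the diagonal extraction for closedness to be the main technical obstacle, though the growth property of $>^k$-increasing sequences should make it routine.
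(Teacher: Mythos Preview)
Your proposal is correct and follows essentially the same route as the paper: both reduce, via Proposition~\ref{sensitive-LYsensitive}, to showing $k\text{-}Prox(T)(x)=X$ for every $x$, and both do this by arguing in the product system $(X\times X,\,T\times T)$ that the diagonal fixed point $(p,p)$ is the unique minimal subset and hence lies in a suitable closed invariant set attached to the forward dynamics of $(x,y)$. The only cosmetic difference is that you work with the $k$-type limit set $\tilde{L}^k(x,y)$ and carefully verify its nonemptiness, closedness, and full $\mathbb{Z}^d$-invariance, whereas the paper uses the forward orbit closure $\overline{\{(T^{n}(x),T^{n}(y)):n>^k 0\}}$ and asserts these properties more tersely; your version is in fact a bit cleaner on the invariance point.
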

\begin{proof}
    Let $u\in X$ be the fixed point, which is the unique minimal subset of $X$. Then $(u,u)$ is a fixed point in $X\times X$ and is the unique minimal subset of $X\times X$. Consider any two points $x,y \in X$. 
    The set $\overline{\{(T^{{n}}(x),T^{{n}}(y)): {n}>^k{0}\}}$ is an invariant closed set and contains a minimal set. 
    Then $(u,u) \in \overline{\{(T^{{n}}(x),T^{{n}}(y)): {n}>^k{0}\}}$,  
    which implies that there is an increasing sequence $(${$t_s$}) 
    $\subset \mathbb{Z}^d$ such that $T^{{t_s}}(x)\longrightarrow u$ and $T^{{t_s}}(y)\longrightarrow u$, 
    i.e., $d(T^{{t_s}}(x),T^{{t_s}}(y)) \longrightarrow 0$ as $s\rightarrow \infty$. So $(x,y)\in k-Prox(T)$ 
    and $ k-Prox(T)=X\times X$. Thus for every $x\in X, ~k-Prox(T)(x)=X$. Hence $(X,T)$ is  $k-$type Li Yorke sensitive by Proposition \ref{sensitive-LYsensitive}.
\end{proof}

\begin{proposition}
    If the system $(X, T)$ is $k-$type Li Yorke sensitive (or $k-$type sensitive), then the product system $(X\times Y, T\times S)$ is $k-$type Li Yorke sensitive (or $k-$type sensitive) for any dynamical system $(Y, S)$.
\end{proposition}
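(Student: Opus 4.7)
The plan is to produce witnesses for sensitivity (respectively Li--Yorke sensitivity) of the product from witnesses for $T$ alone, by holding the second coordinate fixed. Equip $X\times Y$ with the product metric $D((x,y),(x',y'))=\max\{d_X(x,x'),d_Y(y,y')\}$, so that for any ${n}\in\mathbb{Z}^d$ one has $D((T\times S)^{{n}}(x,y),(T\times S)^{{n}}(x',y'))\geq d_X(T^{{n}}(x),T^{{n}}(x'))$, with equality whenever $y=y'$. This one inequality will drive both parts.

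For the sensitivity assertion, let $\delta>0$ be the $k$-type sensitivity constant for $(X,T)$. Given $(x,y)\in X\times Y$ and $\epsilon>0$, apply $k$-type sensitivity of $T$ at $x$ to obtain $x'\in B_{d_X}(x,\epsilon)$ and ${n}>^k{0}$ with $d_X(T^{{n}}(x),T^{{n}}(x'))>\delta$. Setting $y'=y$, the pair $(x',y')$ lies in the $\epsilon$-ball around $(x,y)$ and realizes the same separation $\delta$ under $(T\times S)^{{n}}$ by the displayed inequality. Hence $(X\times Y,T\times S)$ is $k$-type sensitive with the same constant $\delta$.

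For the Li--Yorke sensitivity assertion, let $\epsilon>0$ be the constant given by $k$-type Li--Yorke sensitivity of $(X,T)$; I claim the same $\epsilon$ works for the product. Fix $(x,y)\in X\times Y$ and any basic neighbourhood $U\times V$ of it. By hypothesis there exists $x'\in U$ with $x'\in k\text{-}Prox(T)(x)\setminus k\text{-}Asym_{\epsilon}(T)(x)$. Take $y'=y\in V$. First, using Proposition 5, pick a sequence ${t_s}\in\mathbb{Z}^d$ with ${t_{s+1}}>^k{t_s}$ along which $d_X(T^{{t_s}}(x),T^{{t_s}}(x'))\to0$; since $d_Y(S^{{t_s}}(y),S^{{t_s}}(y))=0$, the same sequence shows $((x,y),(x',y'))\in k\text{-}Prox(T\times S)$. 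Second, the assumption $(x,x')\notin k\text{-}Asym_{\epsilon}(T)$ unpacks (via the definition of $k\text{-}Asym_{\epsilon}$) to: for every ${n}>^k{0}$ there exists ${m}>^k{0}$ with $d_X(T^{{n}+{m}}(x),T^{{n}+{m}}(x'))>\epsilon$. Combined with the coordinatewise inequality above, this yields $((x,y),(x',y'))\notin k\text{-}Asym_{\epsilon}(T\times S)$. Thus $(x',y')\in (U\times V)\cap\bigl(k\text{-}Prox(T\times S)(x,y)\setminus k\text{-}Asym_{\epsilon}(T\times S)(x,y)\bigr)$, and since $U\times V$ was arbitrary, $(x,y)$ lies in the closure of this difference.

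I do not anticipate a genuine obstacle: the argument is a direct transfer, and the only point requiring care is making sure the characterization $(x,x')\notin k\text{-}Asym_{\epsilon}(T)$ is correctly unwound in terms of cofinal ${m}>^k{0}$, so that the same cofinal sequence witnesses the failure of $\epsilon$-asymptoticity in the product. If the paper's convention were to use the sum metric instead of the max, one would replace $\epsilon$ by $\epsilon$ (or at worst $\epsilon/2$ for the proximal direction), but the logic is unchanged.
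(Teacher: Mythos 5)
Your proof is correct and follows essentially the same route as the paper: take the supremum metric on $X\times Y$, freeze the second coordinate, and transfer the witnesses (the same $\delta$ or $\epsilon$, the same ${n}>^k{0}$, and the same proximal-but-not-$\epsilon$-asymptotic sequence paired with the constant point $y$). Your explicit unwinding of $(x,x')\notin k\text{-}Asym_{\epsilon}(T)$ as a cofinality condition is a slightly more careful rendering of a step the paper states as "easy to see," but the argument is the same.
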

\begin{proof}
Taking the supremum metric on the product space $X \times Y$, it is easy to see that for any $x_1,x_2\in X$ and $y \in Y$, $(x_1,x_2)\in k-Prox(T) $ if and only if $ ((x_1,y),(x_2,y))\in k-Prox(T\times S)$ and $(x_1,x_2)\in k-Asym_{\delta}(T) $ if and only if $ ((x_1,y),(x_2,y))\in k-Asym_{\delta}(T\times S)$.

If $(X,T)$ is $k-$type Li Yorke sensitive, then there exists $\delta>0$ such that for every $x\in X$, there is a sequence $(x_n) \in k-Prox(T)(x)\setminus k-Asym_{\delta}(T)(x)$ such that $(x_n)\rightarrow x$. Take the same $\delta$ and consider any $(x,y)\in X\times Y$; then there exists sequence $(x_n,y_n)$ where $y_n=y$ for all $n\in \mathbb{N}$ such that $(x_n,y_n)\rightarrow (x,y)$ and $(x_n,y_n) \in k-Prox(T\times S)(x,y)\setminus k-Asym_{\delta}(T\times S)(x,y)$. Hence $(X\times Y, T\times S)$ is $k-$type Li Yorke sensitive. 

If $(X,T)$ is $k-$type sensitive, then there exists $\delta>0$ such that for every $x\in X$ and $\epsilon>0$, there is a $z\in B_d(x,\epsilon)$ and ${n}>^k{0}$ such that $d(T^{{n}}(x),T^{{n}}(z)) > \delta$. For $(X\times Y, T\times S)$, take the same $\delta>0$. If $(x,y)\in X\times Y$ and $\epsilon>0$, then there is $(z,y)\in B_d((x,y),\epsilon)$ and the same ${n}>^k{0}$ such that $d((T\times S)^{{n}}(x,y),(T\times S)^{{n}}(z,y)) > \delta$. Hence $(X\times Y, T\times S)$ is $k-$type sensitive. 
\end{proof}

\vspace{.5cm}

\subsection{Preservation under conjugacy}

\begin{Definition}
    Let $T_1 : \mathbb{Z}^d \times X \rightarrow X$ and $T_2 : \mathbb{Z}^d \times Y \rightarrow Y$ be $\mathbb{Z}^d$-actions on $X$ and $Y$ respectively. %$T_1$ and $T_2$ are said to be \emph{conjugate} 
    If there exists a homeomorphism $h : X \rightarrow Y$ such that $h\circ T^{{n}}_1 = T^{{n}}_2\circ h,$ for every ${n}\in\mathbb{Z}^d$, then $h$ is called a \emph{conjugacy} from $(X,T_1)$ to $(Y,T_2)$.
    %If $h$ is only a continuous surjective map, then we say $(Y, T_2)$ is a factor of $(X, T_1)$.
\end{Definition}

Note that $h$ and $h^{-1}$ are uniformly continuous maps as $X$ and $Y$ are compact.

\begin{proposition}
    Let $h$ be a conjugacy from $(X,T_1)$ to $(Y,T_2)$. 
    
    (i) $(x_1,x_2)\in k-Prox(T_1)$ if and only if $(h(x_1),h(x_2)) \in k-Prox(T_2)$.
    
    (ii) $(x_1,x_2)\in k-Asym(T_1)$ if and only if $(h(x_1),h(x_2)) \in k-Asym(T_2)$.

    (iii) If there is an $\epsilon>0 $ such that $ y\not\in k-Asym_{\epsilon}(T_1)(x)$, then there exists a $\delta>0$ such that $ h(y)\not\in k-Asym_{\delta}(T_2)(h(x))$.
    
\end{proposition}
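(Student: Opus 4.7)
The plan is to reduce all three parts to the combination of two facts: the intertwining identity $h\circ T_1^{{n}} = T_2^{{n}}\circ h$ for every ${n}\in\mathbb{Z}^d$ (and its inverse form $h^{-1}\circ T_2^{{n}} = T_1^{{n}}\circ h^{-1}$), together with the uniform continuity of $h$ and $h^{-1}$ noted immediately after the definition of conjugacy. I do not anticipate a serious obstacle, since the ingredients are standard and the relation $>^k$ passes to cofinal sequences exactly as $\mathbb{N}$ does in the one-dimensional setting; the main care is needed in unpacking the nested quantifiers in $k-Asym_{\epsilon}$ for part (ii) and in keeping the order of $\delta$ and $\epsilon$ correct in part (iii).

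For (i), I would start from the sequence characterization of $k-Prox$ established in the preceding proposition. Taking $(x_1,x_2)\in k-Prox(T_1)$ yields a sequence $({t_s})$ with ${t_{s+1}} >^k {t_s}$ such that $d(T_1^{{t_s}}(x_1), T_1^{{t_s}}(x_2)) \to 0$. Using uniform continuity of $h$, the intertwining identity $h(T_1^{{t_s}}(x_i)) = T_2^{{t_s}}(h(x_i))$ then transports this to $d(T_2^{{t_s}}(h(x_1)), T_2^{{t_s}}(h(x_2))) \to 0$, placing $(h(x_1),h(x_2))$ in $k-Prox(T_2)$. The converse follows by running the same argument with $h^{-1}$ in place of $h$.

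For (ii), the first step is to unpack the nested union--intersection in the definition of $k-Asym_{\epsilon}$, which shows that $(x_1,x_2)\in k-Asym_{\epsilon}(T)$ if and only if there exists ${n}>^k{0}$ with $d(T^{{i}}(x_1), T^{{i}}(x_2)) \le \epsilon$ for every ${i}>^k{n}$; consequently $(x_1,x_2)\in k-Asym(T)$ iff such a tail exists for every $\epsilon>0$. Given $(x_1,x_2)\in k-Asym(T_1)$ and a target $\epsilon'>0$, uniform continuity of $h$ produces $\epsilon>0$ with $d(a,b)\le\epsilon \Rightarrow d(h(a),h(b))\le\epsilon'$; feeding this $\epsilon$ into the asymptotic property of $T_1$ yields an ${n}$ for which the tail estimate transfers, via the intertwining identity, to $d(T_2^{{i}}(h(x_1)), T_2^{{i}}(h(x_2))) \le \epsilon'$ for all ${i}>^k{n}$. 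Since $\epsilon'$ was arbitrary, $(h(x_1),h(x_2))\in k-Asym(T_2)$, and the converse is symmetric.

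For (iii), I would negate the characterization derived in (ii): the hypothesis $y\notin k-Asym_{\epsilon}(T_1)(x)$ means that for every ${n}>^k{0}$ some ${i}>^k{n}$ satisfies $d(T_1^{{i}}(x), T_1^{{i}}(y)) > \epsilon$. Uniform continuity of $h^{-1}$ supplies $\delta>0$ such that $d(u,v)\le\delta$ forces $d(h^{-1}(u),h^{-1}(v))\le\epsilon$; by contrapositive, the inequality $d(T_1^{{i}}(x), T_1^{{i}}(y))>\epsilon$ forces $d(T_2^{{i}}(h(x)), T_2^{{i}}(h(y)))>\delta$. Applied to the cofinal family of indices ${i}$ above, this shows $h(y)\notin k-Asym_{\delta}(T_2)(h(x))$. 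The only subtlety is that one must choose $\delta$ from $\epsilon$ through $h^{-1}$, not through $h$, so that the implication runs in the useful direction.
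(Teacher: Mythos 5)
Your proposal is correct. For parts (i) and (ii) it follows essentially the same route as the paper: unwind the definitions into statements about distances along indices $>^k {0}$, and transport these through the intertwining identity $h\circ T_1^{{n}}=T_2^{{n}}\circ h$ using uniform continuity of $h$ (and of $h^{-1}$ for the converses); your use of the sequence characterization of $k-Prox$ in (i) is only a cosmetic variant of the paper's direct $\epsilon$-unfolding, and your tail characterization of $k-Asym_{\epsilon}$ in (ii) matches the paper's ``$d(T^{{n+m}}(x_1),T^{{n+m}}(x_2))\leq\epsilon$ for all ${m}>^k{0}$'' formulation. The one genuine divergence is in (iii): the paper argues by contradiction, supposing $(h(x),h(y))\in k-Asym_{\delta}(T_2)$ for every $\delta>0$ and deducing via (ii) that $(x,y)\in k-Asym_{\epsilon}(T_1)$ for every $\epsilon>0$, which only yields the existence of some $\delta$; you instead argue directly, extracting $\delta$ from $\epsilon$ through the modulus of uniform continuity of $h^{-1}$ and pushing the cofinal family of indices where the $T_1$-distance exceeds $\epsilon$ forward to indices where the $T_2$-distance exceeds $\delta$. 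Your version is slightly more informative (it exhibits a concrete $\delta$ depending only on $\epsilon$ and $h^{-1}$, uniformly in $x$ and $y$), and you correctly flag the one subtlety, namely that $\delta$ must come from $h^{-1}$ rather than $h$ so the implication points the right way; both arguments are valid.
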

\begin{proof}
   \textbf{(i)} $(x_1,x_2)\in k-Prox(T_1)$
       
\hspace{2cm}$\implies (x_1,x_2) \in \bigcap_{\epsilon>0}\bigcup_{\substack{{n}>^k {0} \\ {n}\in \mathbb{Z}^d}}  T_1^{{-n}}\times T_1^{{-n}} (V_{\epsilon})$ \vspace{.3cm}
     
\hspace{2cm}$\implies \forall \epsilon>0 ~\exists {n}>^k{0} $ such that $d(T_1^{{n}}(x_1),T_1^{{n}}(x_2)) <  \epsilon. $
     
By uniform continuity of $h$, for every $\delta>0$ there is an $\epsilon>0$ such that $d(T_1^{{n}}(x_1),T_1^{{n}}(x_2)) <  \epsilon$ implies $d(h\circ T_1^{{n}}(x_1), h\circ T_1^{{n}}(x_2)) < \delta$.
     
\hspace{2cm}$  \implies \forall \delta>0 ~\exists {n}>^k{0} $ such that $d(h\circ T_1^{{n}}(x_1), h\circ T_1^{{n}}(x_2)) < \delta $
     
\hspace{2cm}$ \implies \forall \delta>0 ~\exists {n}>^k{0} $ such that $d(T_2^{{n}}\circ h (x_1), T_2^{{n}}\circ h(x_2)) < \delta$ 
     
\hspace{2cm}$\implies (h(x_1),h(x_2)) \in \bigcap_{\delta>0}\bigcup_{\substack{{n}>^k {0} \\ {n}\in \mathbb{Z}^d}}  T_2^{-{n}}\times T_2^{-{n}} (U_{\delta}) $\vspace{.3cm}

\hspace{2cm}$\implies (h(x_1),h(x_2)) \in k-Prox(T_2)$. 

Similarly, using uniform continuity of $h^{-1}$, we can show the converse part.

   \textbf{(ii)} $(x_1,x_2)\in k-Asym(T_1)$
   
\hspace{2cm}$\implies \forall \epsilon>0 ~\exists {n}>^k{0} $ such that $d(T_1^{{n+m}}(x_1),T_1^{{n+m}}(x_2)) \leq \epsilon ~\forall {m}>^k{0}$
     
    By uniform continuity of $h$, for every $\delta>0$ there is an $\epsilon>0$ such that $d(T_1^{{n+m}}(x_1),T_1^{{n+m}}(x_2))< \epsilon$ implies that $d(h\circ T_1^{{n+m}}(x_1), h\circ T_1^{{n+m}}(x_2)) < \delta$.
    
\hspace{2cm}$\implies \forall \delta>0 ~\exists {n}>^k{0} $ such that $d(h\circ T_1^{{n+m}}(x_1), h\circ T_1^{{n+m}}(x_2)) \leq \delta ~\forall m>^k0 $ 

\hspace{2cm}$\implies \forall \delta>0 ~\exists {n}>^k{0} $ such that $d(T_2^{{n+m}}\circ h(x_1), T_2^{{n+m}}\circ h(x_2)) \leq \delta ~\forall {m}>^k{0} $

\hspace{2cm}$\implies (h(x_1),h(x_2)) \in k-Asym(T_2)$. 
    
    Similarly, using uniform continuity of $h^{-1}$, we can show the converse part.

    \textbf{(iii)} Let there exist an $\epsilon>0 $ such that $ y\not\in k-Asym_{\epsilon}(T_1)(x)$, i.e., $(x,y)\not\in k-Asym_{\epsilon}(T_1)$.
    %$\implies \exists \epsilon>0 ~(x,y)\not\in k-Asym_{\epsilon}(T_1)$. 
    Suppose for every $\delta>0, ~(h(x),h(y)) \in k-Asym_{\delta}(T_2)$.
    Then, for every $\epsilon>0, ~(x,y)\in k-Asym_{\epsilon}(T_1)$, which is a contradiction.
    Thus, there exists a $\delta>0$ such that $~(h(x),h(y))\not\in k-Asym_{\delta}(T_2)$, i.e., $ h(y)\not\in k-Asym_{\delta}(T_2)(h(x))$.
\end{proof}

\begin{theorem} Let $h$ be a conjugacy from $(X,T_1)$ to $(Y,T_2)$.
    
    (i) $(X,T_1)$ is $k-$type Li Yorke sensitive if and only if $(Y,T_2)$ is $k-$type Li Yorke sensitive.

    (ii) $(X,T_1)$ is equicontinuous if and only if $(X,T_2)$ is equicontinuous.
    
\end{theorem}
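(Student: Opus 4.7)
The plan is to transport $k-$type Li--Yorke sensitivity across the conjugacy $h$ by converting a single sensitivity constant $\epsilon_1 > 0$ for $T_1$ into a single constant $\epsilon_2 > 0$ for $T_2$. First, using uniform continuity of $h^{-1}$, I fix $\epsilon_2 > 0$ (slightly smaller than the tolerance delivered by uniform continuity at output $\epsilon_1$) such that $d(a,b) \leq \epsilon_2$ in $Y$ forces $d(h^{-1}(a), h^{-1}(b)) \leq \epsilon_1$ in $X$. Via the conjugacy relation $T_2^{n} = h \circ T_1^{n} \circ h^{-1}$, this yields the pair-uniform contrapositive $(x,y) \notin k-Asym_{\epsilon_1}(T_1) \Longrightarrow (h(x), h(y)) \notin k-Asym_{\epsilon_2}(T_2)$, which is the strengthening of part (iii) of the preceding proposition needed here. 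Now, given $y \in Y$, set $x = h^{-1}(y)$ and, by hypothesis, pick a sequence $(x_n) \subset k-Prox(T_1)(x) \setminus k-Asym_{\epsilon_1}(T_1)(x)$ with $x_n \to x$. Continuity of $h$ gives $h(x_n) \to y$; part (i) of the preceding proposition places $h(x_n)$ in $k-Prox(T_2)(y)$; and the contrapositive above removes $h(x_n)$ from $k-Asym_{\epsilon_2}(T_2)(y)$. Hence $y$ lies in the closure of $k-Prox(T_2)(y) \setminus k-Asym_{\epsilon_2}(T_2)(y)$, as required. The converse direction proceeds identically by exchanging the roles of $h$ and $h^{-1}$.

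\textbf{Plan for (ii).} For equicontinuity I will use a standard three-stage uniform-continuity chain. Given $\epsilon > 0$, choose $\epsilon_1 > 0$ from uniform continuity of $h$ so that $d(x_1, x_2) < \epsilon_1$ implies $d(h(x_1), h(x_2)) < \epsilon$; then pick $\delta_1 > 0$ from equicontinuity of $T_1$ so that $d(x_1, x_2) < \delta_1$ implies $d(T_1^{n}(x_1), T_1^{n}(x_2)) < \epsilon_1$ for every $n \in \mathbb{Z}^d$; finally pick $\delta > 0$ from uniform continuity of $h^{-1}$ so that $d(y_1, y_2) < \delta$ forces $d(h^{-1}(y_1), h^{-1}(y_2)) < \delta_1$. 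Composing these three estimates along $T_2^{n} = h \circ T_1^{n} \circ h^{-1}$ gives $d(T_2^{n}(y_1), T_2^{n}(y_2)) < \epsilon$ uniformly in $n$, yielding equicontinuity of $T_2$; the converse follows by symmetry.

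\textbf{Main obstacle.} I anticipate no deep difficulty; the only genuine subtlety lies in part (i), where the definition of $k-Asym_\epsilon$ uses the closed ball $\overline{V_\epsilon}$ while uniform continuity naturally delivers strict estimates. This mismatch is absorbed by a single controlled shrinking of the constant at the outset. The crucial point is that the constructed $\epsilon_2$ depends only on $\epsilon_1$ and on $h$, not on the point $y$ or on the approximating sequence $(x_n)$, so the one-constant Li--Yorke sensitivity condition genuinely transfers to $(Y, T_2)$.
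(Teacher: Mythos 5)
Your proof is correct and follows essentially the same route as the paper for both parts: part (ii) is the identical three-stage uniform-continuity chain, and part (i) transports the approximating sequence through $h$ using the proximal-pair transfer and the asymptotic non-membership transfer. The one substantive difference is in how the asymptotic part is handled, and here your version is actually the stronger and more careful one. The paper's Proposition (iii) states only that for a given pair $(x,y)\not\in k-Asym_{\epsilon}(T_1)$ there exists \emph{some} $\delta>0$ with $(h(x),h(y))\not\in k-Asym_{\delta}(T_2)$, and its proof (by contradiction from part (ii)) produces a $\delta$ that may depend on the pair; the paper's proof of the present theorem then silently uses a single $\delta$ valid for all $x$ and all terms $x_n$ of the sequence, which is exactly the uniform statement you prove. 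Your derivation of a single $\epsilon_2$ from uniform continuity of $h^{-1}$ alone --- via the contrapositive $(x,y)\not\in k-Asym_{\epsilon_1}(T_1)\Rightarrow (h(x),h(y))\not\in k-Asym_{\epsilon_2}(T_2)$ with $\epsilon_2$ depending only on $\epsilon_1$ and $h$ --- is precisely what the theorem needs, and your remark about shrinking the constant to reconcile the closed set $\overline{V_{\epsilon}}$ with strict uniform-continuity estimates is the right way to handle that detail.
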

\begin{proof}
    \textbf{(i)} $(X,T_1)$ is $k-$type Li Yorke sensitive.
    
    $\Leftrightarrow$ $\exists \epsilon>0 $ such that $ ~~\forall x\in X$ there is a sequence $(x_n) \in k-Prox(T_1)(x) \setminus k-Asym_{\epsilon}(T_1)(x)$ with $(x_n) \rightarrow x$.
    
    $\Leftrightarrow$ $\exists \epsilon>0 $ such that $ ~~\forall x\in X$ there is a sequence $(x_n) \in k-Prox(T_1)(x)$ with $(x_n)\not\in k-Asym_{\epsilon}(T_1)(x)$ and $(x_n) \rightarrow x$.
    
    $\Leftrightarrow$ $\exists \delta>0 $ such that $~~\forall x\in X$ there is a sequence $(h(x_n)) \in k-Prox(T_2)(h(x))$ with $(h(x_n))\not\in k-Asym_{\delta}(T_2)(h(x))$ and $(h(x_n)) \rightarrow h(x)$.
    
    $\Leftrightarrow$ $\exists \delta>0 $ such that $~~\forall y\in Y$ there is a sequence $(y_n) \in k-Prox(T_2)(y)$ with $(y_n)\not\in k-Asym_{\delta}(T_2)(y)$ and $(y_n) \rightarrow y$.
    
    $\Leftrightarrow$ $~\exists \delta>0 $ such that $~~\forall y\in Y$ there is a sequence $(y_n) \in k-Prox(T_2)(y) \setminus k-Asym_{\delta}(T_2)(y)$ with $(y_n) \rightarrow y$.
    
    $\Leftrightarrow$ $(Y,T_2)$ is $k-$type Li Yorke sensitive.

    \textbf{(ii)} Let $(X,T_1)$ be equicontinuous, i.e.,
    for every $\epsilon>0$ there exists a $\delta>0$ such that  $~d(x_1,x_2)<\delta$ implies $d(T_1^{{n}}(x_1),T_1^{{n}}(x_2)<\epsilon$ for all ${n}\in\mathbb{Z}^d$.
    By uniform continuity of $h^{-1} $, for every $\delta>0$, there exists $\delta'>0$ such that $~d(h(x_1),h(x_2))<\delta'$ implies that $d(x_1,x_2)<\delta$. 
    By uniform continuity of $h$, for every $\epsilon'>0$, there exists $\epsilon>0$ such that $~d(T_1^{{n}}(x_1),T_1^{{n}}(x_2)<\epsilon$ implies that $d(h\circ T_1^{{n}}(x_1), h\circ T_1^{{n}}(x_2)<\epsilon'$ i.e., $d(T_2^{{n}}\circ h(x_1),T_2^{{n}}\circ h(x_2)<\epsilon'$.
   Thus, we have for every $\epsilon'>0$, a $\delta'>0$ such that  $~d(h(x_1),h(x_2))<\delta'$ implies that $d(T_2^{{n}}(h(x_1)),T_2^{{n}}(h(x_2))<\epsilon'$ for all ${n}\in\mathbb{Z}^d$. Hence $(X,T_2)$ is equicontinuous. Similarly, we can prove the converse part.
\end{proof}

\begin{definition}
    Two points $x,y\in X$ are said to form a \emph{$k-$type Li-Yorke pair} if $(x,y)\in k-Prox(T)\setminus k-Asym(T)$ and the set of all $k$-type Li-Yorke pairs is denoted by $k-LY(T)$. 
    A subset $S$ of $X$ is said to be a \emph{$k-$ scrambled set} if any two distinct pairs of points in $S$ form a $k-$type Li-Yorke pair.
    %every $x,y\in S, x\neq y$ $(x,y)\in k-LY(T)$.
    A dynamical system $(X,T)$ is said to be \emph{$k$-type Li-Yorke chaotic} if $X$ contains an uncountable scrambled set.
\end{definition}

\begin{theorem} Let $h$ be a conjugacy from $(X,T_1)$ to $(Y,T_2)$.
    
    (i) $(x_1,x_2)\in k-LY(T_1)$ if and only if $(h(x_1),h(x_2))\in k-LY(T_2)$.

    (ii) $(X,T_1)$ is $k-$type Li Yorke chaotic if and only if $(Y,T_2)$ is $k-$type Li Yorke chaotic.

\end{theorem}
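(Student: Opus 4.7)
The plan is to reduce both parts to the preceding proposition on conjugacy-preservation of proximal and asymptotic pairs, together with the fact that a conjugacy $h$ is a bijection that is uniformly continuous in both directions.

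For part (i), I would unfold the definition $k\text{-}LY(T_i)=k\text{-}Prox(T_i)\setminus k\text{-}Asym(T_i)$ and simply combine parts (i) and (ii) of the previous proposition. Explicitly, $(x_1,x_2)\in k\text{-}LY(T_1)$ means $(x_1,x_2)\in k\text{-}Prox(T_1)$ and $(x_1,x_2)\notin k\text{-}Asym(T_1)$. By the proposition, the first condition is equivalent to $(h(x_1),h(x_2))\in k\text{-}Prox(T_2)$, and the second condition is equivalent to $(h(x_1),h(x_2))\notin k\text{-}Asym(T_2)$. Putting these together gives $(h(x_1),h(x_2))\in k\text{-}LY(T_2)$, and the reverse implication is identical.

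For part (ii), I would show that $h$ maps scrambled sets bijectively onto scrambled sets. Suppose $S\subseteq X$ is an uncountable $k$-scrambled set for $(X,T_1)$. Consider $h(S)\subseteq Y$; since $h$ is a bijection, $h(S)$ has the same cardinality as $S$, hence is uncountable. Any two distinct points in $h(S)$ are of the form $h(x_1), h(x_2)$ with $x_1\neq x_2$ in $S$, and by part (i) they form a $k$-type Li-Yorke pair for $T_2$. Therefore $h(S)$ is an uncountable $k$-scrambled set for $(Y,T_2)$, so $(Y,T_2)$ is $k$-type Li-Yorke chaotic. The converse is proved symmetrically using the conjugacy $h^{-1}$ from $(Y,T_2)$ to $(X,T_1)$.

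There is no real obstacle here; both statements are essentially bookkeeping, relying entirely on the preservation results already established for proximal and asymptotic pairs under conjugacy. The only care needed is in the cardinality argument in (ii), where the bijectivity of $h$ is essential to transfer uncountability from $S$ to $h(S)$.
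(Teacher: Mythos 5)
Your proposal is correct and follows essentially the same route as the paper: part (i) is the conjunction of parts (i) and (ii) of the preceding proposition on preservation of $k$-type proximal and asymptotic pairs, and part (ii) transports an uncountable $k$-scrambled set through the bijection $h$. Your explicit remark that bijectivity of $h$ is what preserves uncountability is a small point the paper leaves implicit, but the argument is the same.
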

\begin{proof}
    \textbf{(i)} $(x_1,x_2)\in k-LY(T_1)$
    
\hspace{1cm}$\Leftrightarrow (x_1,x_2)\in k-Prox(T_1)$ and $(x_1,x_2)\not\in k-Asym(T_1)$

\hspace{1cm}$\Leftrightarrow (h(x_1),h(x_2))\in k-Prox(T_2)$ and $(h(x_1),h(x_2))\not\in k-Asym(T_2)$

\hspace{1cm}$\Leftrightarrow (h(x_1),h(x_2))\in k-LY(T_2)$

    \textbf{(ii)} $(X,T_1)$ is $k-$type Li Yorke chaotic
    
\hspace{1cm}$\Leftrightarrow$ 
$\exists$ an uncountable set $S$ in $X$ such that $\forall x,y \in S, (x,y)\in k-LY(T_1)$
    
\hspace{1cm}$\Leftrightarrow$ 
$\exists$ an uncountable set $h(S)$ in $Y$ such that $\forall h(x),h(y) \in h(S), $ $ (h(x),h(y))\in k-LY(T_2)$
    
\hspace{1cm}$\Leftrightarrow (Y,T_2)$ is $k-$type Li Yorke chaotic.
\end{proof}

A continuous map $\pi: X\rightarrow Y$ is called an \textit{almost open map} if the image of any non-empty open set in $X$ has a non-empty interior in $Y$ or equivalently, if the preimage of any first category subset of $Y$ is a first category subset of $X$.

\begin{proposition}
    If $\pi:(X,T)\rightarrow (Y,S)$ is an almost open map with $(Y,S)$ being a $k-$type sensitive system, then $(X,T)$ is also $k-$type sensitive.
\end{proposition}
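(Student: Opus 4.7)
The plan is to work directly from the definition of $k-$type sensitivity on open sets. Let $\delta_0 > 0$ be the sensitivity constant supplied by the hypothesis that $(Y,S)$ is $k-$type sensitive. Since $X$ and $Y$ are compact metric spaces, the semiconjugacy $\pi$ is automatically uniformly continuous, and I can choose $\eta > 0$ so that $d_X(a,b) < \eta$ implies $d_Y(\pi(a),\pi(b)) < \delta_0$. The claim will be that $(X,T)$ is $k-$type sensitive with constant $\eta/2$.

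Given an arbitrary non-empty open set $U \subseteq X$, I would use the almost-openness of $\pi$ to extract a non-empty open set $V \subseteq \pi(U)$. Applying $k-$type sensitivity of $(Y,S)$ to $V$ produces points $y_1, y_2 \in V$ and some ${n} >^k {0}$ with $d_Y(S^{{n}}(y_1), S^{{n}}(y_2)) > \delta_0$. Since $V \subseteq \pi(U)$, I can lift to preimages $x_1, x_2 \in U$ with $\pi(x_i) = y_i$. The semiconjugacy identity $\pi \circ T^{{n}} = S^{{n}} \circ \pi$ then gives $\pi(T^{{n}}(x_i)) = S^{{n}}(y_i)$, so $d_Y(\pi(T^{{n}}(x_1)), \pi(T^{{n}}(x_2))) > \delta_0$; by the choice of $\eta$, this forces $d_X(T^{{n}}(x_1), T^{{n}}(x_2)) \geq \eta > \eta/2$, which is precisely the required sensitivity.

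There is no serious obstacle here: the argument is a routine \emph{lift and push} using the semiconjugacy, with almost-openness supplying the freedom to move from $U$ to an open subset of $\pi(U)$ and uniform continuity converting separation downstairs in $Y$ into quantitatively controlled separation upstairs in $X$. The one place to be slightly careful is the lifting step: because we picked $V \subseteq \pi(U)$ rather than merely asking that $\pi(U)$ be non-empty, preimages of $y_1,y_2$ can indeed be chosen inside $U$. An equivalent, more structural proof route is available via Theorem \ref{equivalences}: one verifies the inclusion $(\pi \times \pi)^{-1}(k-Asym_{\delta_0}(S)) \supseteq k-Asym_{\eta'}(T)$ for suitable $\eta' > 0$ obtained from uniform continuity, notes that $\pi \times \pi$ inherits the almost-open property from $\pi$ (since $\pi(U_1) \times \pi(U_2)$ has non-empty interior whenever each factor does), and concludes that $k-Asym_{\eta'}(T)$ is of first category in $X \times X$. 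Either route works, though the direct one above is shorter.
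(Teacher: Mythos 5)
Your proof is correct, but it follows a genuinely different route from the paper's. The paper argues through Baire category: it invokes Theorem \ref{equivalences} to translate $k$-type sensitivity of $(Y,S)$ into ``$k-Asym_{\epsilon}(S)$ is first category in $Y\times Y$'', uses the second characterization of almost openness (preimages of first category sets are first category) together with the fact that $\pi\times\pi$ is almost open, and then the uniform-continuity inclusion $k-Asym_{\delta}(T)\subset(\pi\times\pi)^{-1}(k-Asym_{\epsilon}(S))$ to conclude via Theorem \ref{equivalences} again --- exactly the ``structural'' alternative you sketch at the end. Your main argument instead works directly with the open-set formulation of $k$-type sensitivity (stated as an equivalent definition in the preliminaries) and only uses the weaker, first form of almost openness (nonempty open sets have images with nonempty interior): lift two sensitive points of $\operatorname{int}\pi(U)$ into $U$, push forward with the equivariance $\pi\circ T^{n}=S^{n}\circ\pi$, and convert the separation downstairs into separation upstairs by uniform continuity, with the harmless $\geq\eta$ versus $>\eta/2$ adjustment handled correctly. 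What your route buys is elementarity --- no Baire category, no Theorem \ref{equivalences}, no need to check that $\pi\times\pi$ is almost open --- while the paper's route buys a statement-level link between sensitivity and the category-theoretic description of $k-Asym_{\delta}$, which it has already set up and reuses. Both arguments rely (as does the statement itself, implicitly) on $\pi$ being an equivariant factor map, which you state explicitly by calling it a semiconjugacy; that assumption is shared, not an extra hypothesis you introduced.
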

\begin{proof}
    Since $(Y,S)$ is $k-$type sensitive, by Theorem \ref{equivalences}, there exists $\epsilon>0$ such that $k-Asym_{\epsilon}(S)$ is a first category subset of $Y\times Y$. Since $\pi$ is almost open, so is $\pi\times\pi$ and thus $(\pi\times\pi)^{-1}(k-Asym_{\epsilon}(S))$ is a first category subset of $X\times X$. By uniform continuity of $\pi$, there is a $\delta>0$ such that $d(x_1,x_2)<\delta$ implies $d(\pi(x_1),\pi(x_2))<\epsilon$. Since $k-Asym_{\delta}(T) \subset (\pi\times\pi)^{-1}(k-Asym_{\epsilon}(S))$, it follows that $k-Asym_{\delta}(T)$ is a first category subset of $X\times X$. Hence $(X,T)$ is $k-$type sensitive again by Theorem \ref{equivalences}.
\end{proof}

\vspace{1cm}

\subsection{Induced $\mathbb{Z}^d$ actions}

We mentioned the works of Kamarudin and Dzul-Kifli \cite{kamarudin2021sufficient} in preliminaries. Following a similar approach, we have the following results.
As mentioned in the introduction, let $f$ be a homeomorphism of $X$, $r:\mathbb{Z}^d \rightarrow \mathbb{Z} $ be a homomorphism and $T_f$ be the induced $\mathbb{Z}^d$ action given by $T_f(\textbf{n},x)= f^{r(\textbf{n})}(x) $.

\begin{theorem}
     If $(X,f)$ is an equicontinuous dynamical system, then $(X,T_f)$ is an equicontinuous system.
\end{theorem}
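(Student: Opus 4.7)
\medskip

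\noindent\textbf{Proof plan.} The plan is to unwind the definition of equicontinuity for $T_f$ and reduce it directly to equicontinuity of the base system $(X,f)$, using only the fact that $r$ takes values in $\mathbb{Z}$. Fix $\epsilon>0$. By equicontinuity of $(X,f)$, there exists $\delta>0$ such that for all $x,y\in X$ with $d(x,y)<\delta$ we have $d(f^m(x),f^m(y))<\epsilon$ for every $m\in\mathbb{Z}$. I would use this same $\delta$ to witness equicontinuity of $(X,T_f)$.

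\medskip

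\noindent The key observation is that for every $\mathbf{n}\in\mathbb{Z}^d$, by definition of the induced action, $T_f^{\mathbf{n}}(x)=f^{r(\mathbf{n})}(x)$, and $r(\mathbf{n})\in\mathbb{Z}$. Hence the orbit family of $T_f$ is a subfamily of the orbit family of $f$, namely $\{T_f^{\mathbf{n}}:\mathbf{n}\in\mathbb{Z}^d\}\subseteq\{f^m:m\in\mathbb{Z}\}$. So whenever $d(x,y)<\delta$, applying the equicontinuity of $f$ with $m=r(\mathbf{n})$ yields
\[ d(T_f^{\mathbf{n}}(x),T_f^{\mathbf{n}}(y))=d(f^{r(\mathbf{n})}(x),f^{r(\mathbf{n})}(y))<\epsilon \]
uniformly in $\mathbf{n}\in\mathbb{Z}^d$. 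This is exactly the defining condition for $(X,T_f)$ to be equicontinuous.

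\medskip

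\noindent There is no real obstacle here; the result is essentially immediate once one notes that equicontinuity of $f$ over all of $\mathbb{Z}$ automatically covers every integer value $r(\mathbf{n})$ attained by the homomorphism. Thus the proof is a short verification rather than a substantive argument, and no properties of $r$ beyond $r(\mathbb{Z}^d)\subseteq\mathbb{Z}$ are needed.
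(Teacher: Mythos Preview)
Your proof is correct and follows essentially the same approach as the paper: both argue that $\{T_f^{\mathbf{n}}:\mathbf{n}\in\mathbb{Z}^d\}=\{f^{r(\mathbf{n})}:\mathbf{n}\in\mathbb{Z}^d\}\subseteq\{f^m:m\in\mathbb{Z}\}$, so equicontinuity of the larger family immediately gives equicontinuity of the subfamily. The paper's proof is a one-line version of what you wrote out in $\epsilon$--$\delta$ detail.
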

\begin{proof}
    $(X,f)$ is equicontinuous, i.e., $\{f^n: n\in \mathbb{Z}\}$ is an equicontinuous family. Thus $\{ T_f^n:n\in \mathbb{Z}^d\} $ $=\{f^{r(n)}: n\in \mathbb{Z}^d\} $ $\subset\{f^n: n\in \mathbb{Z}\}$ is an equicontinuous family. Hence $(X, T_f)$ is an equicontinuous system. 
\end{proof}

\begin{proposition}
    If there exists $\textbf{m}>^k\textbf{0}$ such that $r(\textbf{m})=1$, then for any positive integer $n$, we have  $n\textbf{m}\in \mathbb{Z}^d$, $n\textbf{m}>^k\textbf{0}$ and $r(n\textbf{m})=n$.
\end{proposition}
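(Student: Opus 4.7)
The plan is to verify the three claims $n\textbf{m} \in \mathbb{Z}^d$, $n\textbf{m} >^k \textbf{0}$, and $r(n\textbf{m}) = n$ separately, each directly from the relevant definition. The first is immediate since $\mathbb{Z}^d$ is closed under multiplication by any integer, in particular by any positive integer $n$.

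For the second claim, I would unpack the definition of the partial order $>^k$. By assumption $\textbf{m} >^k \textbf{0}$, which means $(-1)^{k_i^b} m_i > 0$ for every coordinate $i \in \{1, \dots, d\}$. Multiplying this strict inequality by the positive integer $n$ preserves its sign, giving $(-1)^{k_i^b}(n m_i) = n \cdot (-1)^{k_i^b} m_i > 0$ for every $i$, which is exactly the statement $n\textbf{m} >^k \textbf{0}$.

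For the third claim, I would use the fact that $r$ is a group homomorphism from $(\mathbb{Z}^d, +)$ to $(\mathbb{Z}, +)$. A quick induction on the positive integer $n$ suffices: the base case $n = 1$ is the hypothesis $r(\textbf{m}) = 1$, and for the inductive step $r((n+1)\textbf{m}) = r(n\textbf{m} + \textbf{m}) = r(n\textbf{m}) + r(\textbf{m}) = n + 1$. Alternatively, one can invoke the explicit formula $r(\textbf{n}) = \sum_{i=1}^d h_i n_i$ directly, which makes the linearity over $\mathbb{Z}$ transparent.

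There is no real obstacle here; the statement is essentially a bookkeeping lemma that packages together scalar-multiplication compatibility with the order $>^k$ and the homomorphism $r$, and it will be used to generate arbitrarily large ''positive'' elements of $\mathbb{Z}^d$ whose images under $r$ exhaust $\mathbb{N}$. This is precisely what one needs to lift dynamical properties from $(X, f)$ to the induced action $(X, T_f)$ in the subsequent results.
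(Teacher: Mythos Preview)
Your proof is correct and essentially the same as the paper's: the paper likewise dismisses $n\textbf{m}>^k\textbf{0}$ as clear and verifies $r(n\textbf{m})=n$ via the explicit linear formula $r(\textbf{m})=h_1m_1+\cdots+h_dm_d$, which is one of the two options you mention. If anything, your unpacking of the order condition is slightly more explicit than the paper's one-word justification.
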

\begin{proof}
     Suppose that there is an $\textbf{m}>^k\textbf{0}$ such that $r(\textbf{m})=1$ where $k\in \{1,2,3,\dots,2^d\}$. Clearly $n\textbf{m}>^k\textbf{0}$. Now,  $r(\textbf{m}) = h_1m_1 + h_2m_2 + \dots + h_dm_d = 1$ where $h_i \in \mathbb{Z}$ for all $i\in {1, 2, \dots, d}$ and $\textbf{m} = (m_1, m_2, \dots, m_d ) \in \mathbb{Z}^d$ . Then, for all $n \in \mathbb{N}$, $n = n(1) = n(h_1m_1 + h_2m_2 + \dots + h_dm_d ) = h_1(nm_1) + h_2(nm_2) + \dots + h_d (nm_d ) = r(n\textbf{m})$.
\end{proof}

\begin{theorem}
    Suppose $k\in \{1,2,3,\dots,2^d\}$ and there exists $\textbf{m}>^k\textbf{0}$ such that $r(\textbf{m})=1$.
  
        (i) If $f$ is sensitive, then $T_f$ is $k-$type sensitive. 
        
        (ii) If $x$ is a periodic point in $(X,f)$, then $x$ is a $k-$type periodic point in $(X,T_f)$.
         
\end{theorem}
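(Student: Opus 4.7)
The plan is to leverage the preceding proposition, which guarantees that under the assumption $\textbf{m} >^k \textbf{0}$ with $r(\textbf{m})=1$, for every positive integer $n$ the vector $n\textbf{m}$ lies in $\mathbb{Z}^d$, satisfies $n\textbf{m} >^k \textbf{0}$, and $r(n\textbf{m}) = n$. This provides the bridge between positive iterates of $f$ in the base system and $k$-positive translations for the induced $\mathbb{Z}^d$ action $T_f$, since $T_f^{n\textbf{m}}(x) = f^{r(n\textbf{m})}(x) = f^n(x)$.

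For part (i), I would take the sensitivity constant of $f$ itself as the $k$-type sensitivity constant of $T_f$. Let $\delta > 0$ witness the sensitivity of $f$. Given an arbitrary $x \in X$ and $\epsilon > 0$, the sensitivity of $f$ produces $y \in B_d(x, \epsilon)$ and a positive integer $n$ such that $d(f^n(x), f^n(y)) > \delta$. Setting $\textbf{n} = n\textbf{m}$, the proposition yields $\textbf{n} >^k \textbf{0}$ together with the identities $T_f^{\textbf{n}}(x) = f^n(x)$ and $T_f^{\textbf{n}}(y) = f^n(y)$; hence $d(T_f^{\textbf{n}}(x), T_f^{\textbf{n}}(y)) > \delta$, which is exactly the condition defining $k$-type sensitivity of $T_f$.

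For part (ii), suppose $x$ is a periodic point of $(X, f)$, so that $f^n(x) = x$ for some $n \in \mathbb{N}$. Setting $\textbf{n} = n\textbf{m}$ again, the proposition gives $\textbf{n} >^k \textbf{0}$, and $T_f^{\textbf{n}}(x) = f^{r(n\textbf{m})}(x) = f^n(x) = x$, which is the definition of a $k$-type periodic point of $T_f$.

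I do not anticipate a serious obstacle: both parts reduce to substituting $\textbf{n} = n\textbf{m}$ into the relevant definitions and invoking the preceding proposition. The only subtlety lies in ensuring that the positive integer $n$ supplied by the classical definitions (sensitivity of $f$, periodicity of $x$) is the one that can be used to scale $\textbf{m}$, but this is immediate because the definitions in $(X, f)$ explicitly require $n \in \mathbb{N}$, which is exactly what the proposition handles.
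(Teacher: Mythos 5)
Your proof is correct and follows essentially the same route as the paper: both parts substitute $\mathbf{n}=n\mathbf{m}$, invoke the preceding proposition to get $\mathbf{n}>^k\mathbf{0}$ and $T_f^{n\mathbf{m}}=f^n$, and keep the same sensitivity constant $\delta$. Nothing is missing.
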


\begin{proof}
     
    \textbf{(i)} Since $f$ is sensitive, there is a $\delta > 0$ such that for every $x\in X$ and $\epsilon>0$ there exists $y \in B_d(x,\epsilon)$ and $l>0$ such that $d(f^l(x),f^l(y))>\delta$. Choosing $\textbf{s} = l\textbf{m}$, we have $\textbf{s} >^k \textbf{0}$ and 
    %Now, there is a $\delta > 0$ such that for every $x\in X$ and $\epsilon>0$ there exists $y \in B_d(x,\epsilon)$ and $ \textbf{s} >^k \textbf{0}$ such that 
    $d(T_f^\textbf{s}(x),T_f^\textbf{s}(y)) = d(f^l(x),f^l(y))>\delta$.
    Hence $T_f$ is $k$ type sensitive.

    \textbf{(ii)}
    Choose a positive integer $n$ such that $f^n(x) = x$. Then $T_f^{n\textbf{m}}(x) = f^{r(n\textbf{m})}(x) = f^n(x) = x $. Hence $x$ is a $k-$type periodic point in $(X,T_f)$.
\end{proof}

\noindent The following corollary follows from the above theorem.

\begin{corollary}
    Suppose $k\in \{1,2,3,\dots,2^d\}$ and there exists $\textbf{m}>^k\textbf{0}$ such that $r(\textbf{m})=1$.
   
    If $(X,f)$ is Devaney (or Auslander-Yorke) chaotic, then $(X,T_f)$ is $k-$type Devaney (or $k-$type Auslander-Yorke) chaotic. 
\end{corollary}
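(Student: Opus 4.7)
The plan is to assemble the corollary by verifying each of the defining conditions of $k$-type Devaney (respectively Auslander-Yorke) chaos for $T_f$ separately, invoking the theorem just proved together with the transitivity result of Kamarudin and Dzul-Kifli (Theorem 2 in the preliminaries). Since the hypothesis $\textbf{m}>^k\textbf{0}$ with $r(\textbf{m})=1$ is the same hypothesis appearing in all three ingredients, no extra work is needed on that front.

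Concretely, suppose $(X,f)$ is Devaney chaotic, so $f$ is sensitive, topologically transitive, and has a dense set of periodic points. First, by part (i) of the preceding theorem, sensitivity of $f$ yields $k$-type sensitivity of $T_f$. Second, by Theorem 2 of the preliminaries, topological transitivity of $f$ together with the existence of $\textbf{m}>^k\textbf{0}$ with $r(\textbf{m})=1$ yields $k$-type transitivity of $T_f$. Third, let $P\subset X$ denote the (dense) set of periodic points of $f$; by part (ii) of the preceding theorem, every point of $P$ is a $k$-type periodic point of $T_f$. Hence the set of $k$-type periodic points of $T_f$ contains $P$ and is therefore itself dense in $X$. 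Combining the three properties yields $k$-type Devaney chaos of $T_f$.

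For the Auslander--Yorke case, exactly the same argument applies, using only the first two ingredients (sensitivity and transitivity) and dropping the periodic-point condition.

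I do not expect any genuine obstacle: the corollary is a direct repackaging of the preceding theorem and the imported transitivity result, and its proof should be written essentially as a one-paragraph bookkeeping argument. The only minor point of care is to verify that the set-theoretic inclusion ``periodic points of $f$'' $\subseteq$ ``$k$-type periodic points of $T_f$'' really does transfer density, but this is automatic since a superset of a dense set is dense.
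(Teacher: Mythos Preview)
Your proposal is correct and matches the paper's intent: the paper gives no explicit proof of this corollary, stating only that it ``follows from the above theorem,'' and your argument spells out precisely how, combining parts (i) and (ii) of the preceding theorem with the Kamarudin--Dzul-Kifli transitivity result quoted in the preliminaries. Your observation about density transferring under supersets is the only detail not made explicit in the paper, and it is indeed automatic.
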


\begin{theorem}
    Suppose $k\in \{1,2,3,\dots,2^d\}$ and there exists $\textbf{m}>^k\textbf{0}$ such that $r(\textbf{m})=1$.

    (i) If $(x,y)$ is a proximal pair in $(X,f)$, then $(x,y)$ is a $k-$type proximal pair in $(X,T_f)$. 
    
    (ii) If $y\not\in Asym_{\delta}(f)(x)$, then $y\not\in k-Asym_{\delta}(T_f)(x)$. Hence, $(x,y)\not\in Asym(f)$ implies that $(x,y)\not\in k-Asym(T_f)$.
    
    (iii) If $(X,f)$ is Li-Yorke sensitive, then $(X,T_f)$ is $k-$type Li-Yorke sensitive.
    
    (iv) If $(x,y)\in LY(f)$, then $(x,y)\in k-LY(T_f)$. Hence, if $(X,f)$ is Li-Yorke chaotic, then $(X,T_f)$ is $k-$type Li-Yorke chaotic.
    
    (v) If $(x,y)$ is a $k-$type asymptotic pair in $(X,T_f)$, then $(x,y)$ is an asymptotic pair in $(X,f)$.   
    
\end{theorem}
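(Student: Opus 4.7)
The plan is to exploit, throughout, the preceding proposition: for every positive integer $c$, the element $c\textbf{m}\in\mathbb{Z}^d$ satisfies $c\textbf{m}>^k\textbf{0}$ and $r(c\textbf{m})=c$. This gives a monotone embedding $c\mapsto c\textbf{m}$ of $\mathbb{N}$ into the cone $\{\textbf{t}\in\mathbb{Z}^d:\textbf{t}>^k\textbf{0}\}$ that intertwines the $f$-dynamics with the $k$-direction of $T_f$, since $T_f^{c\textbf{m}}=f^c$. Parts (i)--(iv) push information forward along this embedding from $f$ to $T_f$, while (v) pulls information back.

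For (i), I would fix an increasing sequence $n_s\nearrow\infty$ witnessing proximality of $(x,y)$ under $f$, set $\textbf{t}_s=n_s\textbf{m}$, and note that $\textbf{t}_{s+1}-\textbf{t}_s=(n_{s+1}-n_s)\textbf{m}>^k\textbf{0}$ and $T_f^{\textbf{t}_s}(x)=f^{n_s}(x)$; the sequence characterization of $k-Prox(T_f)$ then applies directly. For (ii), I would argue contrapositively: if $y\notin Asym_{\delta}(f)(x)$ there are arbitrarily large positive integers $p$ with $d(f^p(x),f^p(y))>\delta$, so given any $\textbf{n}>^k\textbf{0}$ I pick such a $p$ strictly larger than $\max(0,r(\textbf{n}))$ and set $\textbf{m}'=(p-r(\textbf{n}))\textbf{m}$; then $\textbf{m}'>^k\textbf{0}$ and $r(\textbf{n}+\textbf{m}')=p$, witnessing $(x,y)\notin k-Asym_{\delta}(T_f)$.

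Parts (iii) and (iv) then follow almost formally. From (i) one gets $Prox(f)(x)\subseteq k-Prox(T_f)(x)$, and from (ii) one gets $X\setminus Asym_{\epsilon}(f)(x)\subseteq X\setminus k-Asym_{\epsilon}(T_f)(x)$ with the same constant $\epsilon$; intersecting and taking closure transfers Li--Yorke sensitivity with that same $\epsilon$. For (iv), (i) and (ii) together give $LY(f)\subseteq k-LY(T_f)$, so any uncountable scrambled set for $f$ remains scrambled for $T_f$. For (v), given $\epsilon>0$ and a witness $\textbf{n}>^k\textbf{0}$ for $(x,y)\in k-Asym(T_f)$, I restrict to $\textbf{m}'=c\textbf{m}$ with $c\geq 1$ to deduce $d(f^{r(\textbf{n})+c}(x),f^{r(\textbf{n})+c}(y))\leq\epsilon$ for all $c\geq 1$; setting $N=\max(1,r(\textbf{n})+1)$, every $n\geq N$ has the form $r(\textbf{n})+c$ with $c\geq 1$, producing an ordinary $f$-asymptotic tail.

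The main subtlety I expect is sign bookkeeping in (ii) and (v): since $r$ is an arbitrary $\mathbb{Z}$-valued homomorphism, $r(\textbf{n})$ may be negative or zero, so in (ii) the integer $p$ must be chosen large enough that both $p>0$ (to match the forward-orbit definition of non-asymptoticity for $f$) and $p-r(\textbf{n})>0$ (to ensure $\textbf{m}'>^k\textbf{0}$), and in (v) the threshold $N$ must be shifted past $r(\textbf{n})$ to express every large $n$ in the form $r(\textbf{n})+c$ with $c\geq 1$. Once this uniformization is in place, all five parts reduce to routine unwinding of the definitions against the $c\textbf{m}$ trick.
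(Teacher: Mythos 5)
Your proposal is correct and follows essentially the same route as the paper: transfer along $c\mapsto c\textbf{m}$ using $r(c\textbf{m})=c$, with the same sign bookkeeping for $r(\textbf{n})\le 0$ in (ii) and (v) (your (v) merely streamlines the paper's explicit case split by observing that $r(\textbf{n})+c$, $c\ge 1$, already covers all sufficiently large integers). Parts (iii) and (iv) are deduced from (i) and (ii) exactly as in the paper.
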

\begin{proof}
    \textbf{(i)} $(x,y)\in Prox(f)$ 
    
\hspace{1cm}$\implies (x,y) \in \bigcap_{\epsilon>0}\bigcup_{\substack{n> 0 \\ n\in \mathbb{Z}}} \left[ f^{-n}\times f^{-n} (V_{\epsilon}) \right]$ \vspace{0.3cm}

\hspace{1cm}$\implies \forall \epsilon>0 ~\exists n>0 
$ such that $ d(f^n(x),f^n(y)) <  \epsilon$
    
\hspace{1cm}$\implies \forall \epsilon>0 ~\exists \textbf{m}n>^k\textbf{0} $ such that $d(T_f^{\textbf{m}n}(x),T_f^{\textbf{m}n}(y)) <  \epsilon$
    
\hspace{1cm}$\implies (x,y) \in \bigcap_{\epsilon>0}\bigcup_{\substack{\textbf{n}>^k \textbf{0} \\ \textbf{n}\in \mathbb{Z}^d}} \left[ T_f^{\textbf{-n}}\times T_f^{\textbf{-n}} (V_{\epsilon}) \right]$ \vspace{0.3cm}
    
\hspace{1cm}$\implies (x,y) \in k-Prox(T_f).$

    \textbf{(ii)} If $y\not\in Asym_{\delta}(f)(x)$,
then $(x,y)\not\in Asym_{\delta}(f)$. Let $\textbf{n}>^k\textbf{0}$. If $r(\textbf{n})>0$, then choose $n'>0$ such that $d(f^{r(\textbf{n})+n'}(x), f^{r(\textbf{n})+n'}(y)) > \delta$. 
Defining $\textbf{n}'= n'\textbf{m}$, we see that $\textbf{n}'>^k\textbf{0}$ and $r(\textbf{n}+\textbf{n}')=r(\textbf{n})+n'$; thus $d(T_f^{\textbf{n}+\textbf{n}'}(x), T_f^{\textbf{n}+\textbf{n}'}(y)) > \delta$. 
In case $r(\textbf{n})\leq 0$, then choose $n>0$ such that $d(f^{n+1}(x), f^{n+1}(y)) > \delta$ and in this case, define $\textbf{n}'=(n+1-r(\textbf{n}))\textbf{m}$. 
Then $\textbf{n}'>^k\textbf{0}$ and $r(\textbf{n}+\textbf{n}')=n+1$. 
Thus, $d(T_f^{\textbf{n}+\textbf{n}'}(x), T_f^{\textbf{n}+\textbf{n}'}(y)) > \delta$. Hence $(x,y)\not\in k-Asym_{\delta}(T_f)$ and $y\not\in k-Asym_{\delta}(T_f)(x)$.

    \textbf{(iii)} $(X,f)$ is Li Yorke sensitive.
    
\hspace{1cm}$\implies \exists \delta>0 $ such that $\forall x\in X, ~x \in \overline{Prox(f)(x) \setminus Asym_{\epsilon}(f)(x)}$ 
    
\hspace{1cm}$\implies \exists \delta>0 $ such that $\forall x\in X,$ there is a sequence $(x_n) $ in $ Prox(f)(x) \setminus Asym_{\delta}(f)(x)$ with $(x_n)\rightarrow x.$ 
    
\hspace{1cm}$\implies \exists \delta>0 $ such that $\forall x\in X,$ there is a sequence $ (x_n)\in Prox(f)(x)$ with $(x_n) \not\in Asym_{\delta}(f)(x)$ and $(x_n)\rightarrow x.$ 
    
\hspace{1cm}$\implies \exists \delta>0 $ such that $\forall x\in X,$ there is a sequence $ (x_n)\in k-Prox(T_f)(x)$ with $(x_n) \not\in k-Asym_{\delta}(T_f)(x)$ and $(x_n)\rightarrow x.$
    
\hspace{1cm}$\implies \exists \delta>0 $ such that $\forall x\in X,$ there is a sequence $(x_n) $ in $ k-Prox(T_f)(x) \setminus k-Asym_{\delta}(T_f)(x)$ with $(x_n)\rightarrow x.$
    
\hspace{1cm}$\implies \exists \delta>0 $ such that $\forall x\in X, ~x \in \overline{k-Prox(T_f)(x) \setminus k-Asym_{\epsilon}(T_f)(x)}.$
    
\hspace{1cm}$\implies (X,T_f)$ is Li Yorke sensitive.

    \noindent\textbf{(iv)} $(x,y)\in LY(f)$ 
    
\hspace{1cm}$\implies$ $(x,y)\in Prox(f)$ and $(x,y)\not\in Asym(f)$ 
    
\hspace{1cm}$\implies$ $(x,y)\in k-Prox(T_f)$ and $(x,y)\not\in k-Asym(T_f)$ 
    
\hspace{1cm}$\implies$ $(x,y)\in k-LY(T_f)$.
    
    \noindent\textbf{(v)} 
Suppose $(x, y) \in k-Asym(T_f)$. 
This implies that for every $\epsilon > 0$, there exists $\mathbf{n} >^k \mathbf{0}$ such that $d(T_f^{\mathbf{n} + \mathbf{n'}}(x), T_f^{\mathbf{n} + \mathbf{n'}}(y)) \leq \epsilon$ for all $\mathbf{n'} >^k \mathbf{0}$. 
Now, for a given $\epsilon > 0$, let $n = max\{ r(\mathbf{n}), 1\}$. 
If $r(\mathbf{n}) > 0$, then for every $n' > 0$, we set $\mathbf{n}' = n' \mathbf{m}$. 
In the case where $r(\mathbf{n}) \leq 0$, for every $n' > 0$, we take $\mathbf{n'} = (1 + n' - r(\mathbf{n})) \mathbf{m}$. 
In both cases, we have $\mathbf{n'} >^k \mathbf{0}$, $r(\textbf{n}+\mathbf{n'})=n+n'$ and $d(f^{n+n'}(x), f^{n+n'}(y)) \leq \epsilon$ for every $n'>0$. 
This shows that $(x, y) \in Asym(f)$.\end{proof}
  
\textbf{Acknowledgements} The first author would like to acknowledge the financial support provided by the Council of Scientific and Industrial Research, India (CSIR), under fellowship file no: 09/1026(0044)/2021-EMR-I.

\end{document}